\setlist[enumerate]{nosep}
\definecolor{labelkey}{rgb}{0,0.08,0.45}
\definecolor{refkey}{rgb}{0,0.6,0.0}
\definecolor{Brown}{rgb}{0.45,0.0,0.05}
\definecolor{lime}{rgb}{0.00,0.8,0.0}
\definecolor{lblue}{rgb}{0.5,0.5,0.99}
\colorlet{hlcyan}{cyan!30}
\colorlet{hlred}{red!40}
\def\namedlabel#1#2{\begingroup
   \def\@currentlabel{#2}%
   \label{#1}\endgroup
}
\newcommand{\vast}{\bBigg@{4}}
\newcommand{\Vast}{\bBigg@{5}}
\newcommand{\bx}{\ensuremath{\mathbf{x}}}
\newcommand{\bz}{\ensuremath{\mathbf{z}}}
\newcommand{\weakly}{\ensuremath{\:{\rightharpoonup}\:}}
\newcommand{\kkk}{\ensuremath{{k\in{\mathbb N}}}}
\newcommand{\thalb}{\ensuremath{\tfrac{1}{2}}}
\newcommand{\menge}[2]{\big\{{#1}~\big |~{#2}\big\}}
\newcommand{\tmenge}[2]{\{{#1}~\big |~{#2}\}}
\newcommand{\fenv}[1]%
{\ensuremath{\,\overrightarrow{\operatorname{env}}_{#1}}}
\newcommand{\benv}[1]%
{\ensuremath{\,\overleftarrow{\operatorname{env}}_{#1}}}
\newcommand{\scal}[2]{\left\langle{#1},{#2}  \right\rangle}
\newcommand{\RR}{\ensuremath{\mathbb R}}
\newcommand{\NN}{\ensuremath{\mathbb N}}
\newcommand{\ran}{\ensuremath{{\operatorname{ran}}\,}}
\newcommand{\zer}{\ensuremath{\operatorname{zer}}}
\newcommand{\Fix}{\ensuremath{\operatorname{Fix}}}
\newcommand{\Id}{\ensuremath{\operatorname{Id}}}
\newcommand{\tryu}{\ensuremath{T_{\text{\scriptsize Ryu}}}}
\newcommand{\tmt}{\ensuremath{T_{\text{\scriptsize MT}}}}
\newcommand{\tc}{\ensuremath{T_{\text{\scriptsize C}}}}
\newcommand{\fc}{\ensuremath{F_{\text{\scriptsize C}}}}
\newcommand{\bA}{\ensuremath{{\mathbf{A}}}}
\newcommand{\bB}{\ensuremath{{\mathbf{B}}}}
\crefname{figure}{Figure}{Figures}
\crefname{equation}{}{equations}
\crefname{chapter}{Appendix}{chapters}
\crefname{item}{}{items}
\crefname{enumi}{}{}
\theoremstyle{definition}
\newtheorem{theorem}{Theorem}[section]
\newtheorem{lemma}[theorem]{Lemma}
\newtheorem{corollary}[theorem]{Corollary}
\newtheorem{example}[theorem]{Example}
\newtheorem{fact}[theorem]{Fact}
\newtheorem{remark}[theorem]{Remark}
\DeclarePairedDelimiter{\parens}{\lparen}{\rparen}
\providecommand{\RR}{\mathbb{R}}
\providecommand{\ran}{\operatorname{ran}}
\newcommand{\fix}{\ensuremath{\operatorname{Fix}}}
\providecommand{\Id}{\operatorname{{ Id}}}
\providecommand{\NN}{\mathbb{N}}
\providecommand{\fix}{\operatorname{Fix}}
\providecommand{\ran}{\operatorname{ran}}
\providecommand{\Id}{\operatorname{Id}}
\providecommand{\zer}{\operatorname{zer}}
\providecommand{\RR}{\mathbb{R}}
\providecommand{\NN}{\mathbb{N}}
\definecolor{myblue}{rgb}{.8, .8, 1}
\begin{document}

\title{\textsf{
The splitting algorithms by~Ryu, by~Malitsky-Tam, and 
by~Campoy
applied to normal cones of linear subspaces
converge strongly to the projection onto the intersection 
}}

\author{
Heinz H.\ Bauschke\thanks{
Mathematics, University
of British Columbia,
Kelowna, B.C.\ V1V~1V7, Canada. E-mail:
\texttt{heinz.bauschke@ubc.ca}.},~
Shambhavi Singh\thanks{
Mathematics, University
of British Columbia,
Kelowna, B.C.\ V1V~1V7, Canada. E-mail:
\texttt{sambha@student.ubc.ca}.},~ 
and
Xianfu Wang\thanks{
Mathematics, University
of British Columbia,
Kelowna, B.C.\ V1V~1V7, Canada. E-mail:
\texttt{shawn.wang@ubc.ca}.}
}

\date{December 6, 2022} 
\maketitle

\vskip 8mm

\begin{abstract} 
Finding a zero of a sum of maximally monotone operators is a fundamental problem in modern optimization and nonsmooth analysis. 
Assuming that the resolvents of the operators are available, this problem
can be tackled with the Douglas-Rachford algorithm.
However, when dealing with three or more operators, one must 
work in a product space with as many factors as there are operators.
In groundbreaking recent work by Ryu and by Malitsky and Tam, it was 
shown that the number of factors can be reduced by one. 
A similar reduction was achieved recently by Campoy through a clever 
reformulation originally proposed by Kruger. 
All three splitting methods guarantee
\emph{weak} convergence to \emph{some} solution
of the underlying sum problem; strong convergence holds in
the presence of uniform monotonicity.

In this paper, we provide a case study when the operators involved
are normal cone operators of subspaces and the solution set is thus
the intersection of the subspaces. Even though these operators 
lack strict convexity, we show that striking  
conclusions are available in this case: \emph{strong} (instead of weak) 
convergence and
the solution obtained is (not arbitrary but) the \emph{projection onto the intersection}.
To illustrate our results, we also perform
numerical experiments. 
\end{abstract}

{\small
\noindent
{\bfseries 2020 Mathematics Subject Classification:}
{Primary 
41A50,
49M27,
65K05, 
47H05; 
Secondary 
15A10,
47H09,
49M37,
90C25. 
}

{\small
\noindent {\bfseries Keywords:}
best approximation,
Campoy splitting, 
Hilbert space, 
intersection of subspaces, 
linear convergence, 
Malitsky-Tam splitting,
maximally monotone operator, 
nonexpansive mapping, 
resolvent,
Ryu splitting.
}

\section{Introduction}

Throughout the paper, we assume that 
\begin{equation*}
\text{$X$ is a real Hilbert space}
\end{equation*}
with inner product $\scal{\cdot}{\cdot}$ and induced norm $\|\cdot\|$. 
Let $A_1,\ldots,A_n$ be maximally monotone operators on $X$.
(See, e.g., \cite{BC2017} for background on maximally monotone operators.)
One central problem in modern optimization and nonsmooth analysis asks to
\begin{equation}
\label{e:genprob}
\text{find $x\in X$ such that $0\in(A_1+\cdots+A_n)x$.}
\end{equation}
In general, solving \cref{e:genprob} may be quite hard.
Luckily, in many interesting cases, we have access to the 
\emph{firmly nonexpansive resolvents} $J_{A_i} := (\Id+A_i)^{-1}$ 
{\color{black} and their associated \emph{reflected resolvents}
$R_{A_i} = 2J_{A_i}-\Id$} 
which opens the door to employ splitting algorithms to solve 
\cref{e:genprob}. 
The most famous instance is the \emph{Douglas-Rachford algorithm} \cite{DougRach} 
whose importance for this problem was brought to light in the seminal paper 
by Lions and Mercier \cite{LionsMercier}. However, 
the Douglas-Rachford algorithm requires that $n=2$; if $n\geq 3$,
one may employ the Douglas-Rachford algorithm to a reformulation
in the product space $X^n$ \cite[Section~2.2]{Comb09}. 
In recent breakthrough work by Ryu \cite{Ryu}, it was shown 
that for $n=3$ one may formulate an algorithm that works 
in $X^2$ rather than $X^3$. 
We will refer to this method as \emph{Ryu's algorithm}. 
Very recently, Malitsky and Tam proposed in \cite{MT} 
an algorithm for a general $n\geq 3$
that is different from Ryu's and that operators in $X^{n-1}$.
(No algorithms exist in product spaces
featuring fewer factors than $n-1$ factors in a certain technical sense. 
See also \cite{Luis} for an extension of the 
Malitsky-Tam algorithm to handle linear operators.)
We will review these algorithms as well as a recent (Douglas-Rachford based) algorithm 
introduced by Campoy \cite{Campoy} in \cref{sec:known} below.
These three algorithms are known to produce \emph{some} solution 
to \cref{e:genprob} via a sequence that converges \emph{weakly}. 
Strong convergence holds in the presence of uniform monotonicity. 

\emph{The aim of this paper is provide a case study for the situation
when the maximally monotone operators $A_i$ are normal cone operators 
of closed linear subspaces $U_i$ of $X$.}
These operators are not even strictly monotone. 
Our main results show that the splitting algorithms by Ryu,
by Malitsky-Tam, and by Campoy actually produce a sequence 
that converges \emph{strongly}\,! We are also able to \emph{identify the limit} to
be the \emph{projection} onto the intersection
$U_1\cap\cdots\cap U_n$\,!
The proofs of these results rely on the explicit identification of the fixed point
set of the underlying operators.
Moreover, a standard translation technique gives the same result for
\emph{affine} subspaces of $X$ provided their intersection is nonempty.

The paper is organized as follows.
In \cref{sec:aux}, we collect various auxiliary results for later use.
The known convergence results on Ryu splitting, on 
Malitsky-Tam splitting, and on Campoy splitting are reviewed in \cref{sec:known}.
Our main results are presented in \cref{sec:main}.
Matrix representations of the various operators involved
are provided in \cref{sec:matrix}. 
These are useful for our numerical experiments in \cref{sec:numexp}.
{\color{black} We investigate the case of three lines in 
the Euclidean plane in \cref{sec:3lines}.}
Finally, we offer some concluding remarks in \cref{sec:end}.

The notation employed in this paper is 
standard and follows largely \cite{BC2017}. 
When $z=x+y$ and $x\perp y$, then we also write
$z=x\oplus y$ to stress this fact.
Analogously for the Minkowski sum $Z=X+Y$, we
write $Z= X\oplus Y$ as well as $P_Z = P_X\oplus P_Y$ for the 
associated projection provided that $X\perp Y$.

\section{Auxiliary results}
\label{sec:aux}

In this section, we collect useful properties of projection operators
and results on iterating linear/affine nonexpansive operators. 
We start with projection operators.

\subsection{Projections}

\begin{fact}
\label{f:orthoP}
Suppose $U$ and $V$ are nonempty closed convex subsets of $X$ such that
$U\perp V$. Then
$U\oplus V$ is a nonempty closed subset of $X$ and 
$P_{U\oplus V} = P_U\oplus P_V$. 
\end{fact}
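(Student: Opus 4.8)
The plan is to treat the two assertions in turn: first that $U\oplus V$, which by the paper's convention is just the Minkowski sum $U+V$, is nonempty and closed, and then that $P_{U\oplus V}=P_U\oplus P_V$, meaning $P_{U\oplus V}x = P_Ux+P_Vx$ for every $x\in X$. Nonemptiness is immediate, and $U+V$ is convex as a sum of convex sets, so for the projection characterization to apply we really only need to produce closedness. The single ingredient powering everything is the orthogonality $U\perp V$: it gives the Pythagorean identity $\|u+v\|^2=\|u\|^2+\|v\|^2$ for $u\in U$, $v\in V$, and, since $\scal{u-u'}{v-v'}=0$ for all $u,u'\in U$ and $v,v'\in V$, the stronger identity $\|(u+v)-(u'+v')\|^2=\|u-u'\|^2+\|v-v'\|^2$.

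For closedness I would take a sequence $(u_k+v_k)_{\kkk}$ in $U+V$, with $u_k\in U$ and $v_k\in V$, converging to some $w\in X$. A convergent sequence is Cauchy, so applying the second identity above to pairs of indices shows that $(u_k)_{\kkk}$ and $(v_k)_{\kkk}$ are themselves Cauchy, hence convergent by completeness of $X$ to limits $u$ and $v$. Closedness of $U$ and $V$ forces $u\in U$ and $v\in V$, and continuity of addition yields $w=u+v\in U+V$. This route is clean and sidesteps any need to pass to the closed linear spans of $U$ and $V$.

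For the projection formula I would set $p:=P_Ux+P_Vx$ and verify the variational characterization of the projection onto the nonempty closed convex set $U+V$: that $p\in U+V$, which is clear, and that $\scal{x-p}{w-p}\le 0$ for every $w\in U+V$. Writing $w=u+v$ with $u\in U$, $v\in V$, and expanding, the inner product splits into two diagonal terms $\scal{x-P_Ux}{u-P_Ux}$ and $\scal{x-P_Vx}{v-P_Vx}$ together with cross terms of the form $\scal{P_Vx}{u-P_Ux}$ and $\scal{P_Ux}{v-P_Vx}$. The diagonal terms are $\le 0$ by the variational inequalities characterizing $P_Ux$ and $P_Vx$, while each cross term vanishes because $U\perp V$ annihilates inner products between a vector of $U$ and a vector of $V$. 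Summing gives $\scal{x-p}{w-p}\le 0$, whence $p=P_{U+V}x$.

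I expect the only genuinely delicate point to be closedness: Minkowski sums of closed sets need not be closed in general, so this is precisely where orthogonality must be used decisively, and it is the Pythagorean/Cauchy argument that makes it work. The projection formula is then a routine verification once one commits to the variational characterization and simply tracks which inner products the orthogonality kills.
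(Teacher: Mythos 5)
Your proof is correct, and it takes a genuinely different route from the paper in the simplest possible sense: the paper does not prove this fact at all, but merely cites \cite[Proposition~29.6]{BC2017}, whereas you give a self-contained argument. Your two steps are both sound. For closedness, the key identity $\scal{u-u'}{v-v'}=0$ for all $u,u'\in U$, $v,v'\in V$ (valid because $U\perp V$ kills each of the four inner products) gives $\|(u_k+v_k)-(u_j+v_j)\|^2=\|u_k-u_j\|^2+\|v_k-v_j\|^2$, so the component sequences inherit the Cauchy property from the convergent sum; completeness and closedness of $U$ and $V$ then finish the job, and you correctly flag that this is exactly where orthogonality is decisive, since Minkowski sums of closed convex sets need not be closed in general. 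For the projection identity, your verification of the variational characterization at $p=P_Ux+P_Vx$ is complete: the diagonal terms $\scal{x-P_Ux}{u-P_Ux}$ and $\scal{x-P_Vx}{v-P_Vx}$ are nonpositive by the characterizations of $P_Ux$ and $P_Vx$, and the cross terms vanish because each pairs an element of $V$ against a difference of elements of $U$ (or vice versa), which orthogonality annihilates. What your approach buys is a short, elementary proof that makes the paper self-contained and exposes precisely where the hypothesis $U\perp V$ enters; what the paper's citation buys is brevity and deference to a standard reference in which the same type of argument is carried out. Either is appropriate for a statement labelled as a ``Fact.''
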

\begin{proof}
See \cite[Proposition~29.6]{BC2017}. 
\end{proof}

Here is a well known illustration of \cref{f:orthoP} which
we will use repeatedly in the paper (sometimes without explicit mentioning). 

\begin{example}
\label{ex:perp}
Suppose $U$ is a closed linear subspace of $X$.
Then
$P_{U^\perp} = \Id-P_U$. 
\end{example}
\begin{proof}
The orthogonal complement $V := U^\perp$ satisfies $U\perp V$
and also $U+V=X$; thus $P_{U+V}=\Id$ and the result follows.
\end{proof}

\begin{fact} {\bf (Anderson-Duffin)} 
\label{f:AD}
Suppose that $X$ is finite-dimensional and that 
$U,V$ are two linear subspaces of $X$.
Then
$P_{U\cap V} = 2P_U(P_U+P_V)^\dagger P_V$,
where ``$^\dagger$'' denotes the Moore-Penrose inverse of a matrix.
\end{fact}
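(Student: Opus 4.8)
The plan is to verify directly that the matrix $M := 2P_U(P_U+P_V)^\dagger P_V$ is the orthogonal projector onto $U\cap V$, using the characterization that a matrix equals $P_{U\cap V}$ precisely when it is symmetric, restricts to the identity on $U\cap V$, and annihilates $(U\cap V)^\perp=U^\perp+V^\perp$. Throughout I abbreviate $P:=P_U$, $Q:=P_V$, and $S:=P+Q$.

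First I would record the structure of $S$. Since $P,Q$ are symmetric, so is $S$, and for every $x$ one has $\scal{Sx}{x}=\norm{Px}^2+\norm{Qx}^2$, whence $\ker S=\ker P\cap\ker Q=U^\perp\cap V^\perp=(U+V)^\perp$. By symmetry this gives $\ran S=(\ker S)^\perp=U+V$ (finite-dimensionality ensures the sum is closed). The standard Moore--Penrose identities then yield $SS^\dagger=S^\dagger S=P_{U+V}=:\Pi$, and because $U\subseteq U+V$ and $V\subseteq U+V$ we obtain the absorption relations $\Pi P=P\Pi=P$ and $\Pi Q=Q\Pi=Q$.

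The key step---and the one I expect to be the main obstacle, since the defining expression for $M$ is visibly not symmetric---is to establish symmetry of $M$. Here I would exploit $S^\dagger S=\Pi$, which reads $S^\dagger P+S^\dagger Q=\Pi$, so that $S^\dagger Q=\Pi-S^\dagger P$. Substituting and using $P\Pi=P$ gives
\[
M = 2P(\Pi-S^\dagger P) = 2P-2PS^\dagger P,
\]
whose right-hand side is symmetric because $S^\dagger$ is symmetric. Hence $M=M^{\top}=2QS^\dagger P$ as well.

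Finally I would check the two remaining defining properties. For $x\in U\cap V$ one has $Px=Qx=x$ and $Sx=2x$; since $x\in U+V=\ran S$ this forces $S^\dagger x=\tfrac12 x$, whence $Mx=2PS^\dagger Qx=2P(\tfrac12 x)=x$. For the complement, the form $M=2PS^\dagger Q$ annihilates $V^\perp=\ker Q$, while the symmetric form $M=2QS^\dagger P$ annihilates $U^\perp=\ker P$; by linearity $M$ vanishes on $U^\perp+V^\perp=(U\cap V)^\perp$. A symmetric matrix that is the identity on $U\cap V$ and zero on $(U\cap V)^\perp$ is exactly $P_{U\cap V}$, which completes the argument.
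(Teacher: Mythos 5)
Your argument is correct, but it is genuinely different in character from what the paper does: the paper offers no proof at all for this fact, simply deferring to \cite[Corollary~25.38]{BC2017} and to the original Anderson--Duffin paper \cite{AD}, where the result is embedded in the theory of parallel addition of matrices (the expression $P_U(P_U+P_V)^\dagger P_V$ is the parallel sum of the two projectors). Your route is a self-contained elementary verification, and every step checks out: the identification $\ker(P_U+P_V)=U^\perp\cap V^\perp=(U+V)^\perp$ and hence $SS^\dagger=S^\dagger S=P_{U+V}$ is right; the absorption relations $P\Pi=\Pi P=P$, $Q\Pi=\Pi Q=Q$ follow from $U,V\subseteq U+V$ together with taking adjoints; the reduction $M=2P-2PS^\dagger P$ is the clever step that settles symmetry of the visibly asymmetric expression and simultaneously hands you the mirrored form $M=2QS^\dagger P$; the eigenvector computation $S^\dagger x=\tfrac12 x$ for $x\in U\cap V$ is justified exactly because $x\in\ran S$ (so $S^\dagger Sx=\Pi x=x$ while $Sx=2x$); and the two forms of $M$ kill $V^\perp$ and $U^\perp$ respectively, hence all of $(U\cap V)^\perp=U^\perp+V^\perp$ (closedness being automatic in finite dimensions). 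A linear map acting as the identity on $U\cap V$ and as zero on its orthogonal complement is indeed $P_{U\cap V}$. What the citation buys the paper is brevity and a pointer to the wider operator-theoretic context; what your proof buys is self-containedness at the cost of about a page, and it would in fact extend to the infinite-dimensional setting whenever $U+V$ is closed, which is a mild bonus beyond the statement as given.
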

\begin{proof}
See, e.g., \cite[Corollary~25.38]{BC2017} or
the original \cite{AD}. 
\end{proof}

\begin{corollary}
\label{c:AD3}
Suppose that $X$ is finite-dimensional and that 
$U,V,W$ are three linear subspaces of $X$.
Then
\begin{equation*}
P_{U\cap V\cap W} = 4P_U(P_U+P_V)^\dagger P_V\big(2P_U(P_U+P_V)^\dagger P_V+P_W\big)^\dagger P_W. 
\end{equation*}
\end{corollary}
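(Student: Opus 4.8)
The plan is to apply the Anderson--Duffin formula (\cref{f:AD}) twice, exploiting the associativity of intersection. First I would observe that $U\cap V$ is itself a linear subspace of $X$, so \cref{f:AD} applies directly to the pair $U,V$ and yields
\begin{equation*}
P_{U\cap V} = 2P_U(P_U+P_V)^\dagger P_V.
\end{equation*}
Setting $Y := U\cap V$, this provides an explicit matrix expression for $P_Y$.

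Next I would use the identity $U\cap V\cap W = Y\cap W$, where $Y$ and $W$ are again both linear subspaces. Applying \cref{f:AD} a second time, now to the pair $Y,W$, produces
\begin{equation*}
P_{U\cap V\cap W} = P_{Y\cap W} = 2P_Y(P_Y+P_W)^\dagger P_W.
\end{equation*}
Substituting the expression for $P_Y$ from the first step into both occurrences of $P_Y$ on the right-hand side gives
\begin{equation*}
P_{U\cap V\cap W} = 2\big(2P_U(P_U+P_V)^\dagger P_V\big)\big(2P_U(P_U+P_V)^\dagger P_V+P_W\big)^\dagger P_W,
\end{equation*}
and collecting the scalar factor $2\cdot 2=4$ yields the claimed formula.

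The argument is essentially a formal double application of a known result, so I do not anticipate a genuine obstacle. The one point that must be checked is that the hypotheses of \cref{f:AD} are met at each stage: at the first stage $U$ and $V$ are linear subspaces by assumption, and at the second stage the subspace $Y=U\cap V$ (an intersection of subspaces, hence a subspace) together with $W$ again satisfies the hypotheses. Finiteness of the dimension, which \cref{f:AD} requires so that the Moore--Penrose inverse is available, is inherited throughout. Beyond this bookkeeping, the remaining work is the purely algebraic substitution displayed above.
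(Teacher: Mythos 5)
Your proposal is correct and follows exactly the paper's own argument: apply \cref{f:AD} to $(U,V)$ to get $P_{U\cap V}=2P_U(P_U+P_V)^\dagger P_V$, then apply \cref{f:AD} again to the pair $(U\cap V,W)$ and substitute. The paper's proof is just a terser statement of this same double application, so there is nothing to add.
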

\begin{proof}
Use \cref{f:AD} to find $P_{U\cap V}$,
and then use \cref{f:AD} again on $(U\cap V,W)$. 
\end{proof}

\begin{corollary}
\label{c:ADsum}
Suppose that $X$ is finite-dimensional and that $U,V$ are two linear
subspaces of $X$. 
Then
\begin{align*}
P_{U+V} &= \Id-2P_{U^\perp}(P_{U^\perp}+P_{V^\perp})^\dagger P_{V^\perp}\\
&=\Id-2(\Id-P_U)\big(2\Id-P_U-P_V \big)^\dagger(\Id-P_V).
\end{align*}
\end{corollary}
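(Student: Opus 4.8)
The plan is to reduce everything to the Anderson--Duffin formula (\cref{f:AD}) by passing to orthogonal complements. The key observation is the standard de~Morgan-type duality for subspaces, namely $(U+V)^\perp = U^\perp\cap V^\perp$. Combined with \cref{ex:perp}, which tells us that $P_{W^\perp}=\Id-P_W$ for any closed linear subspace $W$, this lets us rewrite the projection onto the sum in terms of a projection onto an intersection, where \cref{f:AD} can be applied directly.

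Concretely, first I would invoke \cref{ex:perp} with $W:=U+V$ to write $P_{U+V}=\Id-P_{(U+V)^\perp}$, and then use the duality $(U+V)^\perp=U^\perp\cap V^\perp$ to obtain $P_{U+V}=\Id-P_{U^\perp\cap V^\perp}$. Next I would apply \cref{f:AD} (Anderson--Duffin) to the pair of subspaces $U^\perp,V^\perp$, which yields
\begin{equation*}
P_{U^\perp\cap V^\perp}=2P_{U^\perp}\big(P_{U^\perp}+P_{V^\perp}\big)^\dagger P_{V^\perp}.
\end{equation*}
Substituting this into the previous expression gives precisely the first displayed equality.

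For the second equality I would simply eliminate the complements using \cref{ex:perp} once more: replace $P_{U^\perp}$ by $\Id-P_U$ and $P_{V^\perp}$ by $\Id-P_V$, and note that then $P_{U^\perp}+P_{V^\perp}=2\Id-P_U-P_V$. This is a purely cosmetic rewriting of the first line.

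I do not expect a genuine obstacle here, since every ingredient is already available in the excerpt; the only point requiring a word of justification is the duality $(U+V)^\perp=U^\perp\cap V^\perp$, which holds for closed subspaces of a Hilbert space and is what makes the whole reduction work. The finite-dimensionality hypothesis is inherited from \cref{f:AD} and is used only to guarantee that the Moore--Penrose inverse is the appropriate notion of pseudoinverse.
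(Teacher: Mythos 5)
Your proof is correct and is essentially identical to the paper's: both write $P_{U+V}=\Id-P_{U^\perp\cap V^\perp}$ via the duality $U+V=(U^\perp\cap V^\perp)^\perp$, apply the Anderson--Duffin formula (\cref{f:AD}) to the pair $(U^\perp,V^\perp)$, and then use \cref{ex:perp} to rewrite the complements. No gaps to report.
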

\begin{proof}
Indeed, $U+V = (U^\perp\cap V^\perp)^\perp$ and so 
$P_{U+V} = \Id - P_{U^\perp\cap V^\perp}$.
Now apply \cref{f:AD} to $(U^\perp,V^\perp)$
followed by \cref{ex:perp}. 
\end{proof}

\begin{fact}
\label{f:Pran}
Let $Y$ be a real Hilbert space, and 
let $A\colon X\to Y$ be a continuous linear operator with closed range.
Then 
$P_{\ran A} = AA^\dagger$. 
\end{fact}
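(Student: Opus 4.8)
The plan is to work directly from the definition of the Moore--Penrose inverse of a bounded linear operator with closed range, rather than to verify the four Penrose identities. Recall that, because $\ran A$ is assumed closed, we have the orthogonal decompositions $X = \ker A \oplus (\ker A)^\perp$ and $Y = \ran A \oplus (\ran A)^\perp$, and the corestriction $A_0 := A|_{(\ker A)^\perp}\colon (\ker A)^\perp \to \ran A$ is a bounded linear bijection. The pseudoinverse is then defined by $A^\dagger := A_0^{-1}\, P_{\ran A}$; that is, $A^\dagger$ first projects onto $\ran A$ and then inverts $A$ on the orthogonal complement of its kernel.

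First I would verify that $A_0$ really is a bijection with bounded inverse. Injectivity is clear, since $\ker A_0 = \ker A \cap (\ker A)^\perp = \{0\}$; surjectivity onto $\ran A$ holds because every $Ax$ equals $A_0$ applied to $P_{(\ker A)^\perp}x$; and boundedness of $A_0^{-1}$ follows from the bounded inverse theorem once one knows that $(\ker A)^\perp$ and $\ran A$ are closed subspaces, hence Banach spaces in their own right. This is precisely the step where the closed-range hypothesis is indispensable: without it, $A^\dagger$ need not be bounded and the asserted identity can fail.

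With $A^\dagger$ in hand, the computation is short. Fix $y \in Y$ and write $y = y_1 + y_2$ with $y_1 := P_{\ran A}\,y \in \ran A$ and $y_2 \in (\ran A)^\perp$. Then $A^\dagger y = A_0^{-1} y_1$ lies in $(\ker A)^\perp$, so applying $A$ to it agrees with applying $A_0$, giving $A A^\dagger y = A_0 A_0^{-1} y_1 = y_1 = P_{\ran A}\,y$. Since $y \in Y$ was arbitrary, $A A^\dagger = P_{\ran A}$, as claimed.

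I expect no serious obstacle here beyond bookkeeping; the only subtle point is the one flagged above, namely that closedness of $\ran A$ is exactly what makes $A_0^{-1}$, and hence $A^\dagger$, a well-defined bounded operator. An alternative route would first show that $A A^\dagger$ is a self-adjoint idempotent using the Penrose relations $A A^\dagger A = A$ and $(A A^\dagger)^* = A A^\dagger$ --- hence an orthogonal projection --- and then identify its range as $\ran A$ via $A A^\dagger(Ax) = Ax$. However, this presupposes the Penrose identities, which themselves rest on the same closed-range fact, so the direct argument above seems cleaner.
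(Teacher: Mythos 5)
Your proof is correct. The paper itself does not argue this fact at all --- its ``proof'' is a pointer to \cite[Proposition~3.30(ii)]{BC2017} --- so your self-contained argument is genuinely different in character, and it is the natural one: decompose $X = \ker A \oplus (\ker A)^\perp$ and $Y = \ran A \oplus (\ran A)^\perp$, invert the bijective corestriction $A_0$, and read off $AA^\dagger y = A_0A_0^{-1}P_{\ran A}y = P_{\ran A}y$. The one point worth flagging is definitional: your argument takes $A^\dagger := A_0^{-1}P_{\ran A}$ as the \emph{definition} of the Moore--Penrose inverse, which is exactly Groetsch's convention (a reference the paper uses elsewhere), so under that convention your proof is complete. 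In \cite{BC2017}, however, $A^\dagger y$ is defined as the minimal-norm least-squares solution, i.e.\ $A^\dagger y = P_{C_y}(0)$ where $C_y$ is the set of minimizers of $x \mapsto \norm{Ax-y}$; starting from that definition one would additionally need the (standard and easy) identification of $P_{C_y}(0)$ with $A_0^{-1}P_{\ran A}y$, which your write-up silently presupposes. So: correct modulo the choice of definition, and your observation that closedness of $\ran A$ is what makes $A_0^{-1}$ (hence $A^\dagger$) bounded is exactly the right point to emphasize --- the identity is purely algebraic, but the well-definedness of $A^\dagger$ as a continuous operator is where the hypothesis earns its keep.
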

\begin{proof}
See, e.g., \cite[Proposition~3.30(ii)]{BC2017}. 
\end{proof}

\subsection{Linear (and affine) nonexpansive iterations}

We now turn results on iterating linear or affine nonexpansive operators.

\begin{fact}
\label{f:convasymp}
Let $L\colon X\to X$ be linear and nonexpansive, 
and let $x\in X$. 
Then 
\begin{equation*}
L^kx \to P_{\Fix L}(x)
\quad\Leftrightarrow\quad
L^kx - L^{k+1}x \to 0. 
\end{equation*}
\end{fact}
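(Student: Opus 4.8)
The two implications demand very different effort, so I would treat them separately. The forward implication is immediate from continuity: if $L^kx\to p:=P_{\Fix L}(x)$, then since $p\in\Fix L$ and $L$ is Lipschitz, $L^{k+1}x=L(L^kx)\to Lp=p$, whence $L^kx-L^{k+1}x\to p-p=0$. All the substance lies in the reverse implication, where the asymptotic regularity $L^kx-L^{k+1}x\to 0$ must be upgraded to \emph{strong} convergence of the whole orbit to the projection $p$.

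My plan for the reverse direction rests on two pillars. The first is the \emph{von Neumann mean ergodic theorem} for linear contractions on a Hilbert space: since $\|L\|\le 1$ forces $\Fix L=\Fix L^{*}$, one has the orthogonal decomposition $X=\Fix L\oplus\overline{\ran(\Id-L)}$, and consequently the Ces\`aro averages $C_Nx:=\tfrac1N\sum_{j=0}^{N-1}L^jx$ converge strongly to the \emph{orthogonal} projection $P_{\Fix L}(x)=p$. The second pillar is plain nonexpansiveness, which supplies the contraction estimate $\|L^kC_mx-p\|=\|L^k(C_mx-p)\|\le\|C_mx-p\|$ for all $k,m$ (and, incidentally, makes $k\mapsto\|L^kx-p\|=\|L^k(x-p)\|$ nonincreasing).

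The engine tying these together is a telescoping comparison of an orbit point with a Ces\`aro average of \emph{shifted} orbit points. Writing $\epsilon_k:=\sup_{i\ge k}\|L^ix-L^{i+1}x\|$, which tends to $0$ by hypothesis, the bound $\|L^kx-L^{k+j}x\|\le\sum_{i=0}^{j-1}\|L^{k+i}x-L^{k+i+1}x\|\le j\,\epsilon_k$ gives, after averaging over $0\le j\le m-1$,
\begin{equation*}
\|L^kx-L^kC_mx\|=\Big\|\tfrac1m\sum_{j=0}^{m-1}\big(L^kx-L^{k+j}x\big)\Big\|\le\tfrac{m}{2}\,\epsilon_k.
\end{equation*}
Combining this with the contraction estimate through the triangle inequality yields, for \emph{all} $k$ and $m$,
\begin{equation*}
\|L^kx-p\|\le\tfrac{m}{2}\,\epsilon_k+\|C_mx-p\|.
\end{equation*}
Given $\eta>0$, I would first choose $m$ so large that $\|C_mx-p\|<\eta/2$ (mean ergodic theorem), \emph{freeze} this $m$, and only then pick $k$ large enough that $\tfrac{m}{2}\epsilon_k<\eta/2$ (asymptotic regularity); this forces $\|L^kx-p\|<\eta$ for all large $k$, i.e.\ $L^kx\to p$.

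The main obstacle—and the reason the naive route stalls—is that asymptotic regularity on its own only yields \emph{weak} subsequential limits lying in $\Fix L$ (via demiclosedness of $\Id-L$), which in infinite dimensions does not close the gap to strong convergence, since an orbit of constant norm slowly ``rotating'' toward weak zero is a priori conceivable. The crux is therefore the order of quantifiers in the final step: the Ces\`aro window $m$ must be fixed first, to harvest the ergodic average, and the comparison error $\tfrac{m}{2}\epsilon_k$ is tamed afterwards by letting $k\to\infty$ with $m$ held fixed. Making this diagonal balancing airtight—together with recording that the ergodic limit is precisely the orthogonal projection $P_{\Fix L}$—is where the real care lies.
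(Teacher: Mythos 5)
Your argument is correct, and it is worth noting that the paper itself does not prove this statement at all: it is recorded as a Fact with a pointer to the literature (Baillon, Baillon--Bruck--Reich, Bauschke--Deutsch--Hundal--Park, and \cite[Proposition~5.28]{BC2017}). Your proposal therefore supplies a genuine, self-contained proof where the paper only cites. The route you take is tailored exactly to the hypotheses at hand: linearity and the Hilbert structure give $\Fix L=\Fix L^{*}$, hence the orthogonal splitting $X=\Fix L\oplus\overline{\ran(\Id-L)}$ and von Neumann's mean ergodic theorem $C_mx\to P_{\Fix L}x$; the telescoping bound $\|L^kx-L^kC_mx\|\le\tfrac{m}{2}\,\epsilon_k$ and the contraction estimate $\|L^k(C_mx-p)\|\le\|C_mx-p\|$ then close the gap, and your insistence on fixing the Ces\`aro window $m$ \emph{before} sending $k\to\infty$ is precisely the point where a careless write-up would fail, so you have the quantifiers in the right order. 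All individual steps check out (including $\epsilon_k\to0$ from asymptotic regularity, and $L^{k+1}x\to Lp=p$ in the easy direction, which uses that $p\in\Fix L$). By contrast, the sources the paper cites --- especially Baillon and Baillon--Bruck--Reich --- establish far more general theorems (odd or merely nonexpansive nonlinear maps, Banach-space settings), which is machinery well beyond what the linear Hilbert-space statement needs. What your approach buys is elementarity and transparency at exactly the level of generality of the Fact; what the cited route buys is scope. Either is a legitimate justification of the statement as used in the paper.
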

\begin{proof}
See \cite[Proposition~4]{Baillon},
\cite[Theorem~1.1]{BBR}, 
\cite[Theorem~2.2]{BDHP}, or 
\cite[Proposition~5.28]{BC2017}. 
(The versions in \cite{Baillon} and \cite{BBR} are much more general.)
\end{proof}

\begin{fact}
\label{f:averasymp}
Let $T\colon X\to X$ be averaged nonexpansive with 
$\Fix T\neq\varnothing$. 
Then $(\forall x\in X)$ 
$T^kx-T^{k+1}x\to 0$. 
\end{fact}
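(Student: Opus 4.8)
The plan is to reduce the claim to a telescoping estimate by invoking the quantitative description of averagedness together with the existence of a fixed point. Since $T$ is averaged nonexpansive, there exist $\alpha\in\opint{0,1}$ and a nonexpansive $N\colon X\to X$ with $T=(1-\alpha)\Id+\alpha N$; equivalently (see, e.g., \cite[Proposition~4.35]{BC2017}), $T$ satisfies
\begin{equation*}
(\forall x\in X)(\forall y\in X)\qquad
\|Tx-Ty\|^2 \le \|x-y\|^2 - \tfrac{1-\alpha}{\alpha}\,\|(\Id-T)x-(\Id-T)y\|^2.
\end{equation*}
First I would fix any $z\in\Fix T$, which is possible because $\Fix T\neq\varnothing$, and specialize the displayed inequality to $y:=z$. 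Using $Tz=z$ and $(\Id-T)z=0$, this yields
\begin{equation*}
(\forall x\in X)\qquad
\|Tx-z\|^2 \le \|x-z\|^2 - \tfrac{1-\alpha}{\alpha}\,\|x-Tx\|^2.
\end{equation*}

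Next I would iterate this inequality along the orbit. Writing $x_k:=T^kx$ for a fixed starting point $x\in X$, so that $x_{k+1}=Tx_k$, the estimate becomes
\begin{equation*}
(\forall k\in\NN)\qquad
\|x_{k+1}-z\|^2 \le \|x_k-z\|^2 - \tfrac{1-\alpha}{\alpha}\,\|x_k-x_{k+1}\|^2.
\end{equation*}
In particular the nonnegative sequence $(\|x_k-z\|^2)_{k\in\NN}$ is nonincreasing, hence bounded, and summing the inequality over $k=0,\ldots,N$ gives the telescoping bound
\begin{equation*}
\tfrac{1-\alpha}{\alpha}\sum_{k=0}^{N}\|x_k-x_{k+1}\|^2
\le \|x_0-z\|^2-\|x_{N+1}-z\|^2 \le \|x-z\|^2.
\end{equation*}

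Letting $N\to\infty$, the series $\sum_{k\in\NN}\|x_k-x_{k+1}\|^2$ converges, since its partial sums are bounded by $\tfrac{\alpha}{1-\alpha}\|x-z\|^2$, and therefore its general term tends to $0$; that is, $T^kx-T^{k+1}x=x_k-x_{k+1}\to 0$, as claimed. The argument involves no genuine obstacle: the only substantive ingredient is the quantitative averagedness inequality, and once a fixed point is inserted the result is a routine Fej\'er-monotonicity-plus-telescoping computation. The hypothesis $\Fix T\neq\varnothing$ is used solely to produce the point $z$ that makes the right-hand side collapse and thereby turns the defining inequality into a usable descent estimate; without it the summability would be unavailable.
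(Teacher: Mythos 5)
Your proof is correct. Note, however, that the paper does not actually prove this fact at all: it simply cites Bruck--Reich and \cite[Corollary~5.16(ii)]{BC2017}. What you have written out is essentially the standard argument that underlies those citations, namely the Krasnosel'ski\u{\i}--Mann/Fej\'er-monotonicity proof: insert a fixed point into the quantitative characterization of $\alpha$-averagedness, observe the descent inequality
\begin{equation*}
\|x_{k+1}-z\|^2 \le \|x_k-z\|^2 - \tfrac{1-\alpha}{\alpha}\,\|x_k-x_{k+1}\|^2,
\end{equation*}
and telescope. Your version has two advantages over the bare citation: it is self-contained (valid in any real Hilbert space, with no compactness or dimensionality assumptions), and it is quantitative --- it gives square-summability $\sum_{k}\|T^kx-T^{k+1}x\|^2 \le \tfrac{\alpha}{1-\alpha}\|x-z\|^2$ for every $z\in\Fix T$, which is strictly more information than the asymptotic-regularity statement $T^kx-T^{k+1}x\to 0$ being claimed. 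The only blemish is cosmetic: you should confirm the precise numbering of the averagedness characterization in \cite{BC2017} (it is the standard equivalence for $\alpha$-averaged operators), but the inequality you quote is the correct one and the rest of the argument stands on its own.
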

\begin{proof}
See Bruck and Reich's paper \cite{BruRei} or \cite[Corollary~5.16(ii)]{BC2017}.
\end{proof}

\begin{corollary}
\label{c:key}
Let $L\colon X\to X$ be linear and averaged nonexpansive.
Then 
$(\forall x\in X)$ $L^kx \to P_{\Fix L}(x)$.
\end{corollary}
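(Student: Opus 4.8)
The plan is to combine the two immediately preceding facts, which dovetail perfectly for this statement. First I would observe that averaged nonexpansiveness implies nonexpansiveness, so $L$ is in particular a linear nonexpansive operator and \cref{f:convasymp} applies: the desired convergence $L^kx \to P_{\Fix L}(x)$ holds \emph{if and only if} the difference sequence satisfies $L^kx - L^{k+1}x \to 0$. Thus the whole task reduces to establishing this asymptotic regularity.

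The second step is to secure $L^kx - L^{k+1}x \to 0$ via \cref{f:averasymp}. That fact is stated for averaged nonexpansive $T$ with $\Fix T\neq\varnothing$, so before invoking it I must confirm that $\Fix L\neq\varnothing$. This is immediate from linearity: since $L$ is linear we have $L(0)=0$, whence $0\in\Fix L$ and the fixed point set is nonempty. With $L$ averaged nonexpansive and $\Fix L\neq\varnothing$, \cref{f:averasymp} (applied with $T=L$) yields $L^kx - L^{k+1}x \to 0$ for every $x\in X$.

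Combining the two, the equivalence in \cref{f:convasymp} then delivers the conclusion $L^kx \to P_{\Fix L}(x)$ for all $x\in X$. I do not anticipate any genuine obstacle; the argument is a clean splice of the two facts. The only point worth flagging explicitly is the verification that $\Fix L\neq\varnothing$, which is precisely the hypothesis needed to apply \cref{f:averasymp} and which the linearity of $L$ supplies for free.
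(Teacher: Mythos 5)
Your proof is correct and follows exactly the paper's own argument: note that linearity gives $0\in\Fix L$ so that \cref{f:averasymp} yields asymptotic regularity, then conclude via the equivalence in \cref{f:convasymp}. The only difference is that you spell out the splice more explicitly than the paper's two-line proof.
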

\begin{proof}
Because $0\in\Fix L$, we have $\Fix L\neq\varnothing$. 
Now combine
\cref{f:convasymp} with \cref{f:averasymp}.
\end{proof}

\begin{fact}
\label{f:BLM}
Let $L$ be a linear nonexpansive operator and let $b\in X$.
Set $T\colon X\to X\colon x\to Lx+b$ and suppose that 
$\Fix T\neq\varnothing$.
Then $b\in\ran(\Id-L)$, and for every $x\in X$ and
$a\in(\Id-L)^{-1}b$, the following hold:
\begin{enumerate}
\item $b=a-La\in\ran(\Id-L)$.
\item $\Fix T = a + \Fix L$.
\item 
\label{f:BLMiii}
$P_{\Fix T}(x) = P_{\Fix L}(x) + P_{(\Fix L)^\perp}(a)$.
\item $T^kx = L^k(x-a)+a$.
\item $L^kx \to P_{\Fix L}x$ $\Leftrightarrow$ $T^kx \to P_{\Fix T}x$. 
\end{enumerate}
\end{fact}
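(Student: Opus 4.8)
The plan is to exploit the single algebraic fact that, because $a\in(\Id-L)^{-1}b$ means precisely $La+b=a$, the operator $T$ is nothing but $L$ conjugated by the translation $x\mapsto x-a$. First I would dispose of the elementary items. For (i), rewriting $La+b=a$ gives $b=a-La=(\Id-L)a\in\ran(\Id-L)$. For (ii), if $Tx=x$ then subtracting $La+b=a$ from $Lx+b=x$ yields $L(x-a)=x-a$, so $x\in a+\Fix L$; conversely, for $y\in\Fix L$ one computes $T(a+y)=La+Ly+b=(La+b)+y=a+y$, giving the reverse inclusion. For (iv), the base case $Tx=Lx+b=L(x-a)+(La+b)=L(x-a)+a$ is immediate, and the inductive step follows by applying $T$ once more and invoking $La+b=a$ again.

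The one item needing a small argument is (iii). Since $L$ is nonexpansive, hence continuous and linear, $\Id-L$ is continuous and linear, so $\Fix L=\ker(\Id-L)$ is a closed linear subspace and $P_{\Fix L}$ is a bounded linear idempotent. By (ii), $\Fix T=a+\Fix L$ is a closed affine subspace, and the standard translation formula for the projection onto a translate of a subspace gives $P_{a+\Fix L}(x)=a+P_{\Fix L}(x-a)$. Expanding by linearity of $P_{\Fix L}$ yields $P_{\Fix T}(x)=P_{\Fix L}(x)+\big(a-P_{\Fix L}(a)\big)$, and since $P_{(\Fix L)^\perp}=\Id-P_{\Fix L}$ by \cref{ex:perp}, the last bracket is exactly $P_{(\Fix L)^\perp}(a)$, which is (iii).

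Finally, (v) falls out of combining (iv) and (iii). From (iv) we have $T^kx=L^k(x-a)+a$, while rewriting (iii) as $P_{\Fix T}(x)=P_{\Fix L}(x-a)+a$ (again using linearity of $P_{\Fix L}$ together with \cref{ex:perp}) produces the clean identity
\begin{equation*}
T^kx-P_{\Fix T}(x)=L^k(x-a)-P_{\Fix L}(x-a).
\end{equation*}
Hence $T^kx\to P_{\Fix T}(x)$ if and only if $L^k(x-a)\to P_{\Fix L}(x-a)$. Reading both convergence statements over all $x\in X$ and using that $x\mapsto x-a$ is a bijection of $X$, the two properties in (v) are equivalent; this is precisely the form in which the result is applied, e.g.\ in conjunction with \cref{c:key}.

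I would expect the only genuinely delicate points to be the bookkeeping in (iii)---correctly peeling off the orthogonal component $P_{(\Fix L)^\perp}(a)$ from the affine projection---together with the care required in (v), where it is the translated vector $x-a$, rather than $x$ itself, that drives the $L$-iteration, so that the biconditional is naturally read as an equivalence of the two convergence properties over all starting points. Everything else reduces to direct substitution and a one-line induction.
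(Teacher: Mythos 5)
Your proof is correct. Note that the paper itself gives no argument for \cref{f:BLM} --- its entire ``proof'' is the citation to \cite[Lemma~3.2 and Theorem~3.3]{BLM} --- so there is nothing internal to compare against; your translation--conjugation argument (once $La+b=a$, the operator $T$ is $L$ conjugated by $x\mapsto x-a$) is a clean, self-contained reconstruction of what the cited result supplies, and items (i), (ii), (iv) and the bookkeeping in (iii) are all handled correctly. Two points deserve emphasis. First, the preamble claim $b\in\ran(\Id-L)$ --- which is also what guarantees that the set $(\Id-L)^{-1}b$ from which you draw $a$ is nonempty, so that your itemized statements are not vacuous --- requires the hypothesis $\Fix T\neq\varnothing$ and one explicit line: any $x_0\in\Fix T$ satisfies $b=(\Id-L)x_0$. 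You use this tacitly but never state it. Second, your decision to read (v) as an equivalence of the two convergence properties quantified over \emph{all} starting points is not merely ``natural''; it is forced, because the pointwise-in-$x$ biconditional is false in general. Take $L=-\Id$ on $X=\RR$, $a=1$, $b=2$, so that $Tx=-x+2$, $\Fix L=\{0\}$, $\Fix T=\{1\}$: at $x=0$ one has $L^k0\equiv 0=P_{\Fix L}0$, while $T^k0$ oscillates between $0$ and $2$ and does not converge to $P_{\Fix T}0=1$. Your identity $T^kx-P_{\Fix T}x=L^k(x-a)-P_{\Fix L}(x-a)$ makes transparent why the shift by $a$ cannot be removed pointwise, and the quantified version is exactly the form in which the paper uses \cref{f:BLM}, namely in combination with \cref{c:key} in the consistent affine case.
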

\begin{proof}
See \cite[Lemma~3.2 and Theorem~3.3]{BLM}. 
\end{proof}

\begin{remark}
\label{r:BLM}
Consider \cref{f:BLM} and its notation.
If $a\in(\Id-L)^{-1}b$ then 
$P_{(\Fix L)^\perp}(a)$ is likewise because 
$b = (\Id-L)a = (\Id-L)(P_{\Fix L}(a)+P_{(\Fix L)^\perp}(a))
=(\Id-L)P_{(\Fix L)^\perp}(a)$; moreover,
using \cite[Lemma~3.2.1]{Groetsch}, we see that 
\begin{equation*}
(\Id-L)^\dagger b 
=(\Id-L)^\dagger(\Id-L)a
=P_{(\ker(\Id-L))^\perp}(a)
=P_{(\Fix L)^\perp}(a),
\end{equation*}
where again ``$^\dagger$'' denotes the Moore-Penrose inverse of a continuous
linear operator (with possibly nonclosed range).
So given $b\in X$, we may concretely set 
\begin{equation*}
a = (\Id-L)^\dagger b \in (\Id-L)^{-1}b;
\end{equation*}
with this choice, \cref{f:BLMiii} turns into the even more pleasing identity
\begin{equation*}
P_{\Fix T}(x) = P_{\Fix L}(x) + a.
\end{equation*}
\end{remark}

\section{Ryu, Malitsky-Tam, and Campoy splitting }

In this section, we present the precise form of
Ryu's, the Malitsky-Tam, and Campoy's algorithms and review 
known convergence results. 

\label{sec:known}

\subsection{Ryu splitting}

We start with Ryu's algorithm. In this subsection, 
\begin{equation*}
\text{$A,B,C$ are maximally monotone operators on 
$X$, 
}
\end{equation*}
with resolvents $J_A,J_B,J_C$, respectively.
The problem of interest is to 
\begin{equation}
\label{e:Ryuprob}
\text{find $x\in X$ such that $0\in (A+B+C)x$,}
\end{equation}
and we assume that \cref{e:Ryuprob} has a solution. 
The algorithm pioneered by Ryu  \cite{Ryu} provides a method
for finding a solution to \cref{e:Ryuprob}.
It proceeds as follows. Set\footnote{We will express vectors in
product spaces both as column and as row vectors depending 
on which version is more readable.}
\begin{equation}
\label{e:genM}
M\colon X\times X\to X\times X\times X\colon
\begin{pmatrix}
x\\
y
\end{pmatrix}
\mapsto
\begin{pmatrix}
J_A(x)\\[+1mm]
J_B(J_A(x)+y)\\[+1mm]
J_C\big(J_A(x)-x+J_B(J_A(x)+y)-y\big) 
\end{pmatrix}. 
\end{equation}
Next, denote by $Q_1\colon X\times X\times X\to X\colon 
(x_1,x_2,x_3)\mapsto x_1$
and similarly for $Q_2$ and $Q_3$. 
We also set $\Delta := \menge{(x,x,x)\in X^3}{x\in X}$. 
We are now ready to introduce the \emph{Ryu operator}
\begin{equation}
\label{e:TRyu}
T := \tryu \colon X^2\to X^2\colon 
z\mapsto 
z + \big((Q_3-Q_1)Mz,(Q_3-Q_2)Mz\big). 
\end{equation}
Given a starting point $(x_0,y_0)\in X\times X$, 
the basic form of 
Ryu splitting generates a governing sequence via
\begin{equation}
\label{e:basicRyu}
(\forall \kkk)\quad (x_{k+1},y_{k+1}) := 
(1-\lambda)(x_k,y_k) + \lambda T(x_k,y_k).
\end{equation}

The following result records the basic convergence properties 
by Ryu \cite{Ryu}, 
and recently improved by Arag\'on-Artacho, Campoy, and Tam \cite{AACT20}. 

\begin{fact} {\bf (Ryu and also Aragon-Artacho-Campoy-Tam)}
\label{f:Ryu}
The operator
$\tryu$ is nonexpansive with 
\begin{equation}
\label{e:fixtryu}
\Fix \tryu = 
\menge{(x,y)\in X\times X}{J_A(x)=J_B(J_A(x)+y) = J_C(R_A(x)-y)}
\end{equation}
and 
\begin{equation*}
\zer(A+B+C) = J_A\big(Q_1\Fix\tryu\big). 
\end{equation*}
Suppose that $0<\lambda<1$ and 
consider the sequence generated by 
\cref{e:basicRyu}. 
Then there exists $(\bar{x},\bar{y})\in X\times X$ such that 
\begin{equation*}
(x_k,y_k)\weakly (\bar{x},\bar{y}) \in \Fix\tryu, 
\end{equation*}
\begin{equation*}
M(x_k,y_k) \weakly M(\bar{x},\bar{y})\in\Delta, 
\end{equation*}
and 
\begin{equation}
\label{e:210815d}
\big((Q_3-Q_1)M(x_k,y_k),(Q_3-Q_2)M(x_k,y_k)\big)\to (0,0). 
\end{equation}
In particular,
\begin{equation*}
J_A(x_k) \weakly J_A\bar{x}\in \zer(A+B+C).
\end{equation*}
\end{fact}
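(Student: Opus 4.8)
The plan is to dispose of the fixed-point and solution identities by a short algebraic computation, and then to spend the real effort on the nonexpansiveness of $\tryu$, after which the convergence assertions fall out of standard Krasnoselskii--Mann theory supplemented by a demiclosedness argument. First I would abbreviate the three coordinates of $M$ by $a := J_A(x)$, $b := J_B(a+y)$, and $c := J_C(a-x+b-y)$, so that the update reads $\tryu(x,y) = (x,y) + (c-a,\,c-b)$. Thus $(x,y)\in\Fix\tryu$ precisely when $c=a$ and $c=b$, i.e.\ $a=b=c$; substituting $a=b$ into the third argument gives $c = J_C(2a-x-y) = J_C(R_A(x)-y)$, which is exactly the stated description of $\Fix\tryu$. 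For the solution identity I would extract the three inclusions carried by the resolvents at a fixed point: $a=J_A(x)$ yields $x-a\in Aa$, $a=J_B(a+y)$ yields $y\in Ba$, and $a=J_C(a-x+a-y)$ yields $a-x-y\in Ca$; adding these three gives $(x-a)+y+(a-x-y)=0\in(A+B+C)a$, so $a=J_A(x)\in\zer(A+B+C)$. The reverse inclusion is obtained by reconstructing a suitable $(x,y)$ from a given zero, which establishes $\zer(A+B+C)=J_A(Q_1\Fix\tryu)$.

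Second, and this is where I expect the main obstacle to lie, I would prove that $\tryu$ is nonexpansive. The conceptually cleanest route is to exhibit $\tryu$ (up to the obvious affine lifting) as the resolvent of a maximally monotone operator on the product space $X^2$, in the spirit of the frugal resolvent-splitting framework; this makes firm nonexpansiveness of the underlying resolvent transparent and, just as importantly, hands us the demiclosedness of $\Id-\tryu$ that we shall need later. If one instead argues by hand, the task is to expand $\|\tryu z - \tryu z'\|^2$ for two inputs $z=(x,y)$ and $z'=(x',y')$ and to bound it using the firm nonexpansiveness of $J_A,J_B,J_C$, rewritten as the monotonicity inequalities $\langle (x-a)-(x'-a'),\,a-a'\rangle\ge 0$ and its analogues for $B$ and $C$. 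The delicate point is purely organizational: one must combine these three inequalities with an \emph{exact} algebraic identity so that every cross term cancels and only $\|z-z'\|^2$ survives on the right. Producing that telescoping identity is the nontrivial core of Ryu's original argument, and I expect it to be the hardest step here as well.

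Finally, for the convergence statements I would appeal to the general theory. Since the inclusion has a solution, $\Fix\tryu\neq\varnothing$; with $0<\lambda<1$ the recurrence is a $\lambda$-averaged iteration of the nonexpansive map $\tryu$, hence asymptotically regular and Fej\'er monotone with respect to $\Fix\tryu$. Opial's lemma then delivers a weak limit $(\bar x,\bar y)$, and demiclosedness of $\Id-\tryu$ at $0$ places it in $\Fix\tryu$; asymptotic regularity is exactly the vanishing of the residual, which is \cref{e:210815d}. To upgrade this to $M(x_k,y_k)\weakly M(\bar x,\bar y)\in\Delta$, I would note that the residual forces $a_k-b_k\to 0$ and $a_k-c_k\to 0$ strongly, so it suffices to show $a_k=J_A(x_k)\weakly J_A(\bar x)$; this I would obtain from the weak-strong closedness of $\gra A$ applied to the bounded sequence $(a_k,\,x_k-a_k)$, using uniqueness of the already-identified weak limit to pass from subsequential to full convergence. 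The concluding assertion $J_A(x_k)\weakly J_A\bar x\in\zer(A+B+C)$ is then immediate from this weak convergence together with the solution identity established in the first paragraph.
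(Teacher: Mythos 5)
First, a framing remark: the paper does not prove this statement at all --- it is labelled a \emph{Fact} and its ``proof'' is the citation ``See \cite{Ryu} and \cite{AACT20}.'' So your attempt is necessarily a from-scratch reconstruction, and the question is whether it would stand on its own. The routine parts do: your identification of $\Fix\tryu$, the computation showing $J_A(Q_1\Fix\tryu)\subseteq\zer(A+B+C)$ together with the reconstruction argument for the reverse inclusion, and the Krasnosel'skii--Mann skeleton (averagedness for $0<\lambda<1$, asymptotic regularity giving \cref{e:210815d}, Opial plus demiclosedness of $\Id-\tryu$ giving $(x_k,y_k)\weakly(\bar x,\bar y)\in\Fix\tryu$) are all correct. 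The first genuine gap is nonexpansiveness: you explicitly defer the ``telescoping identity,'' which is the actual content of Ryu's argument, so that part is announced rather than proved. Worse, the route you call conceptually cleanest cannot work: $\tryu$ is \emph{not} firmly nonexpansive in general, hence is not the resolvent of any monotone operator, nor a translate of one (translations of inputs and outputs preserve firm nonexpansiveness). Concretely, take $A=B=0$ and $C=N_{\{0\}}$, so $J_A=J_B=\Id$ and $J_C=0$; then $\tryu(x,y)=(0,-x)$, and with $z=(1,1)$, $z'=(0,0)$ one gets $\|\tryu z-\tryu z'\|^2=1$ while $\scal{\tryu z-\tryu z'}{z-z'}=-1$. (Also, the resolvent structure is not needed for demiclosedness: Browder's demiclosedness principle applies to every nonexpansive map.) Thus only the by-hand monotonicity computation remains, and that is exactly the step you left out.

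The second gap is in the shadow-sequence claim. Your reduction is right: since the residual gives $a_k-b_k\to 0$ and $a_k-c_k\to 0$ strongly, it suffices to prove $a_k=J_A(x_k)\weakly J_A(\bar x)$. But the justification via ``weak-strong closedness of $\gra A$ applied to $(a_k,x_k-a_k)$'' fails, because demiclosedness of the graph requires \emph{strong} convergence of one of the two coordinates, and you have neither: $x_k$ converges only weakly, and no strong convergence of $a_k$ or of $x_k-a_k$ is available. No generic argument can rescue this step: resolvents are not weakly sequentially continuous, and graphs of maximally monotone operators are not sequentially weak$\times$weak closed (e.g., $\partial\|\cdot\|$ on $\ell^2$ with $x_n=e_1+e_n$). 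This is precisely the ``shadow convergence'' difficulty familiar from Douglas--Rachford splitting, where weak convergence of $J_A$ along the governing sequence was a genuinely hard result of Svaiter type; in \cite{Ryu} and \cite{AACT20} it is obtained from the specific Fej\'er/summability inequalities produced by the nonexpansiveness computation, not from graph closedness. So the two genuinely hard components of the Fact --- nonexpansiveness of $\tryu$ and weak convergence of $M(x_k,y_k)$ --- remain, respectively, unproved and incorrectly justified.
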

\begin{proof}
See \cite{Ryu} and \cite{AACT20}. 
\end{proof}

\subsection{Malitsky-Tam splitting}

We now turn to the Malitsky-Tam algorithm. 
In this subsection, let $n\in\{3,4,\ldots\}$ and 
let $A_1,A_2,\ldots,A_n$ 
be maximally monotone operators on  $X$. 
The problem of interest is to 
\begin{equation}
\label{e:MTprob}
\text{find $x\in X$ such that $0\in (A_1+A_2+\cdots + A_n)x$,}
\end{equation}
and we assume that \cref{e:MTprob} has a solution. 
The algorithm proposed by Malitsky and Tam \cite{MT} provides a method
for finding a solution to \cref{e:MTprob}.
Now set\footnote{Again, we will express vectors in
product spaces both as column and as row vectors depending 
on which version is more readable.}
\begin{subequations}
\label{e:MTM}
\begin{align}
M\colon X^{n-1}
&\to 
X^n\colon
\begin{pmatrix}
z_1\\
\vdots\\
z_{n-1}
\end{pmatrix}
\mapsto
\begin{pmatrix}
x_1\\
\vdots\\
x_{n-1}\\
x_n
\end{pmatrix},
\quad\text{where}\;\;
\\[+2mm]
&(\forall i\in\{1,\ldots,n\})
\;\;
x_i = \begin{cases}
J_{A_1}(z_1), &\text{if $i=1$;}\\
J_{A_i}(x_{i-1}+z_i-z_{i-1}), &\text{if $2\leq i\leq n-1$;}\\
J_{A_n}(x_1+x_{n-1}-z_{n -1}), &\text{if $i=n$.}
\end{cases}
\end{align}
\end{subequations}
As before, we denote by $Q_1\colon X^n \to X\colon 
(x_1,\ldots,x_{n-1},x_n)\mapsto x_1$
and similarly for $Q_2,\ldots,Q_n$. 
We also set 
\begin{equation}\label{e:delta}
\Delta := \menge{(x,\ldots,x)\in X^n}{x\in X},
\end{equation} 
which is also known as the diagonal in $X^n$. 
We are now ready to introduce the \emph{Malitsky-Tam (MT) operator}
\begin{equation}
\label{e:TMT}
T := \tmt \colon X^{n-1}\to X^{n-1}\colon 
\bz\mapsto 
\bz+ 
\begin{pmatrix}
(Q_2-Q_1)M\bz\\
(Q_3-Q_2)M\bz\\
\vdots\\
(Q_n-Q_{n-1})M\bz
\end{pmatrix}. 
\end{equation}
Given a starting point $\bz_0 \in X^{n-1}$, 
the basic form of 
MT splitting generates a
governing sequence via
\begin{equation}
\label{e:basicMT}
(\forall \kkk)\quad \bz_{k+1} := 
(1-\lambda)\bz_{k} + \lambda T\bz_k. 
\end{equation}

The following result records the basic convergence. 

\begin{fact} {\bf (Malitsky-Tam)} 
\label{f:MT}
The operator
$\tmt$ is nonexpansive with 
\begin{equation}
\label{e:fixtmt}
\Fix \tmt = 
\menge{z\in X^{n-1}}{Mz \in \Delta},
\end{equation}
\begin{equation*}
\zer(A_1+\cdots+A_n) = J_{A_1}\big(Q_1\Fix\tmt\big).
\end{equation*}
Suppose that $0<\lambda<1$ and 
consider the sequence generated by 
\cref{e:basicMT}. 
Then there exists $\bar{\bz}\in X^{n-1}$ such that 
\begin{equation*}
\bz_k\weakly \bar{\bz} \in \Fix\tmt, 
\end{equation*}
\begin{equation*}
M\bz_{k} \weakly M\bar{\bz}\in\Delta,
\end{equation*}
and 
\begin{equation}
\label{e:210817a}
(\forall (i,j)\in\{1,\ldots,n\}^2)\quad 
(Q_i-Q_j)M\bz_k\to 0. 
\end{equation}
In particular,
\begin{equation*}
Q_1 M\bz_k \weakly Q_1 M\bar{\bz}\in \zer(A_1+\cdots+A_n). 
\end{equation*}
\end{fact}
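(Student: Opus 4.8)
The plan is to read \cref{e:basicMT} as a Krasnoselskii--Mann iteration of the averaged operator $(1-\lambda)\Id+\lambda\tmt$, and to deduce every assertion from four ingredients: that $\tmt$ is nonexpansive; that its fixed points are exactly the $\bz$ with $M\bz\in\Delta$; that such fixed points encode the zeros of $A_1+\cdots+A_n$; and that standard averaged-operator theory then supplies weak convergence together with asymptotic regularity, after which a demiclosedness argument identifies the limit of $M\bz_k$.

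First I would prove that $\tmt$ is nonexpansive. The route is to unwind the definition of $M$ in \cref{e:MTM} and exploit that every resolvent $J_{A_i}$ is firmly nonexpansive: writing, for two inputs $\bz,\bz'$, the firm-nonexpansiveness inequality at each of the $n$ resolvent evaluations and summing them, the increments $z_i-z_{i-1}$ appearing in \cref{e:MTM} should recombine so that the coupling terms telescope and only nonnegative residuals survive, leaving $\|\tmt\bz-\tmt\bz'\|^2\le\|\bz-\bz'\|^2$. I expect this bookkeeping to be the main obstacle: the correction vector in \cref{e:TMT} couples consecutive coordinates of $M\bz$, so one must pair the firm-nonexpansiveness inequalities against precisely the right differences for the cross terms to cancel. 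This is the structural novelty of the Malitsky--Tam construction.

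Next I would characterize the fixed points. By \cref{e:TMT}, $\bz\in\Fix\tmt$ iff the correction vector vanishes, i.e.\ $(Q_{i+1}-Q_i)M\bz=0$ for $1\le i\le n-1$, which says exactly that consecutive coordinates of $M\bz$ coincide; hence $\Fix\tmt=\menge{\bz\in X^{n-1}}{M\bz\in\Delta}$, establishing \cref{e:fixtmt}. For the zero-set identity, let $M\bz\in\Delta$ with common coordinate value $x$. Unwinding the resolvent relations in \cref{e:MTM} gives $z_1-x\in A_1x$, then $z_i-z_{i-1}\in A_ix$ for $2\le i\le n-1$, and finally $x-z_{n-1}\in A_nx$; adding these inclusions lets the $z_i$-terms telescope to $0$, whence $0\in(A_1+\cdots+A_n)x$ with $x=J_{A_1}(z_1)=Q_1M\bz$. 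Reversing this construction manufactures, from any zero, a fixed point mapping onto it, which yields $\zer(A_1+\cdots+A_n)=J_{A_1}(Q_1\Fix\tmt)$.

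Finally I would run the convergence machinery. Since $0<\lambda<1$ and $\tmt$ is nonexpansive, the operator $(1-\lambda)\Id+\lambda\tmt$ is averaged with the same fixed-point set, which is nonempty by the solvability assumption; \cref{f:averasymp} then gives $\bz_k-\bz_{k+1}\to 0$, that is, $\lambda(\tmt\bz_k-\bz_k)\to 0$. Reading off coordinates yields $(Q_{i+1}-Q_i)M\bz_k\to 0$, and summing consecutive gaps gives \cref{e:210817a} for every $(i,j)$. Weak convergence $\bz_k\weakly\bar\bz\in\Fix\tmt$ is the Krasnoselskii--Mann theorem (\fejer\ monotonicity of $(\bz_k)$ relative to $\Fix\tmt$ together with demiclosedness of $\Id-\tmt$ at $0$). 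It remains to upgrade this to $M\bz_k\weakly M\bar\bz\in\Delta$: the sequence $(M\bz_k)$ is bounded, so I would take an arbitrary weak cluster point, use the vanishing gaps to place it in $\Delta$, and apply a demiclosedness argument for the maximally monotone $A_i$ --- checking the usual $\limsup$ inner-product condition by means of the asymptotic estimates --- to identify it with $M\bar\bz$; as all cluster points then agree, $M\bz_k\weakly M\bar\bz$. The ``in particular'' claim follows at once from $Q_1M\bz_k\weakly Q_1M\bar\bz$ and the zero-set identity.
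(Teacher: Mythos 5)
The paper itself offers no proof of this statement: it is recorded as a Fact and justified only by the citation ``See \cite{MT}'', so the real comparison is with the original Malitsky--Tam argument. Your outline follows the same architecture as that argument, and the portions you actually execute are correct. The observation that $\bz\in\Fix\tmt$ iff the update vector in \cref{e:TMT} vanishes iff $M\bz\in\Delta$ is exactly right; your telescoping of the inclusions $z_1-x\in A_1x$, $z_i-z_{i-1}\in A_ix$ for $2\le i\le n-1$, and $x-z_{n-1}\in A_nx$, together with its reversal, is a complete and correct proof of $\zer(A_1+\cdots+A_n)=J_{A_1}(Q_1\Fix\tmt)$; and, once nonexpansiveness is granted, the deduction of weak convergence of $(\bz_k)$ and of \cref{e:210817a} from the Krasnoselskii--Mann theorem plus asymptotic regularity (\cref{f:averasymp}) is standard and correctly assembled.

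The gap is that the two claims carrying all of the analytic difficulty are left as intentions rather than proofs. First, nonexpansiveness of $\tmt$: you describe the firm-nonexpansiveness-plus-telescoping strategy and candidly flag the bookkeeping as the obstacle, but that bookkeeping \emph{is} the theorem --- it is precisely the inequality that distinguishes the Malitsky--Tam construction from naive variants, and nothing in your text certifies that the cross terms cancel. (Note also that $\tmt$ is in general merely nonexpansive, not averaged, for $n\geq 3$ --- which is exactly why the strict relaxation $0<\lambda<1$ is imposed --- so one cannot shortcut the computation by composition-of-averaged-maps arguments.) Second, and more seriously, the claim $M\bz_k\weakly M\bar{\bz}$: your plan is to take a weak cluster point of the bounded sequence $(M\bz_k)$, place it in $\Delta$ via the vanishing gaps, and ``identify it with $M\bar{\bz}$'' by demiclosedness. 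But resolvents are not weakly sequentially continuous, and the graph of a maximally monotone operator is not sequentially closed in the weak--weak topology; so from $\bz_k\weakly\bar{\bz}$ and $Q_1M\bz_{k_j}\weakly \bar{x}$ one cannot directly conclude $Q_1\bar{\bz}-\bar{x}\in A_1\bar{x}$. Identifying the cluster point as $M\bar{\bz}$ --- rather than merely as some diagonal point whose common entry is a zero of the sum --- requires actually verifying the $\limsup$ inner-product condition in the Brezis/Minty-type closure criterion using the specific estimates generated by the iteration. This is the Svaiter-type argument that makes shadow-sequence convergence nontrivial even for Douglas--Rachford (compare \cite[Theorem~26.11]{BC2017}), and it is the crux of the convergence proof in \cite{MT}. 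Until that computation is carried out, the assertions $M\bz_k\weakly M\bar{\bz}\in\Delta$ and the final ``in particular'' display remain unproven.
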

\begin{proof}
See \cite{MT}. 
\end{proof}

\subsection{Campoy splitting}
\label{ss:Csplit}
Finally, we turn to Campoy's algorithm \cite{Campoy}. 
Again, let $n\in\{3,4,\ldots\}$ and 
let $A_1,A_2,\ldots,A_n$ 
be maximally monotone operators on  $X$. 
The problem of interest is to 
\begin{equation}
\label{e:Cprob}
\text{find $x\in X$ such that $0\in (A_1+A_2+\cdots + A_n)x$,}
\end{equation}
and we assume again that \cref{e:Cprob} has a solution. 
Denote the diagonal in $X^{n-1}$ by $\Delta$, and define
the \emph{embedding operator} $E$ by 
\begin{equation}
\label{e:embed}
E\colon X\to X^{n-1}\colon x\mapsto (x,x,\ldots,x).
\end{equation}
Next, define two operators $\bA$ and $\bB$ on $X^{n-1}$ by
\begin{align*}
\bA\colon (x_1,\ldots,x_{n-1}) &\mapsto \tfrac{1}{n-1}\big(A_nx_1,\ldots,A_nx_{n-1}\big) + N_{\Delta}(x_1,\ldots ,x_{n-1}),\\
\bB\colon (x_1,\ldots,x_{n-1}) &\mapsto \big(A_1x_1,\ldots,A_{n-1}x_{n-1}\big).
\end{align*}
We note that this way of splitting is also contained 
in early works of Alex Kruger
(see \cite{Kruger81} and \cite{Kruger85}) to whom we are grateful for making
us aware of this connection. However, the relevant resolvents (see \cref{f:C}) were only
very recently computed by Campoy. 
We now define the \emph{Campoy operator} by 
\begin{equation}
\label{e:CT}
T := \tc \colon X^{n-1}\to X^{n-1}\colon 
\bz\mapsto 
R_{\bB}R_{\bA}\bz = \bz - 2J_{\bA}\bz + 2J_{\bB}R_{\bA}\bz. 
\end{equation}
Given a starting point $\bz_0 \in X^{n-1}$, 
the basic form of 
Campoy's splitting algorithm generates a
governing sequence via
\begin{equation}
\label{e:basicC}
(\forall \kkk)\quad \bz_{k+1} := 
(1-{\lambda}\big)\bz_{k} + {\lambda} T\bz_k. 
\end{equation}

The following result records basic properties and the convergence result. 

\begin{fact} {\bf (Campoy)} 
\label{f:C}
The operators $\bA$ and $\bB$ are maximally monotone, with resolvents 
\begin{subequations}
\label{e:f:C}
\begin{align}
M := J_{\bA}\colon (x_1,\ldots,x_{n-1})&\mapsto E\bigg(J_{\frac{1}{n-1}A_n}\Big(
\tfrac{1}{n-1}\sum_{i=1}^{n-1}x_i\Big)\bigg),\label{e:f:Ca}\\
J_{\bB} \colon (x_1,\ldots,x_{n-1})&\mapsto 
\big(J_{A_1}x_1,\ldots,J_{A_{n-1}}x_{n-1} \big), 
\end{align}
\end{subequations}
respectively. 
We also have 
\begin{equation*}
\zer(\bA+\bB) = E\big(\zer(A_1+\cdots+A_n)\big).
\end{equation*}
The operator 
\begin{equation}
\label{e:CF}
\fc := \thalb\Id + \thalb\tc
=\Id-J_{\bA}+J_{\bB}R_{\bA} 
\end{equation}
is the standard Douglas-Rachford (firmly nonexpansive) 
operator for 
finding a zero of $\bA+\bB$ and its reflected version is the 
Campoy operator 
$\tc = 2\fc-\Id$ is therefore nonexpansive. 
Suppose that $0<\lambda<1$ and 
consider the sequence generated by 
\cref{e:basicC}. 
Then there exists $\bar{\bz}\in X^{n-1}$ such that 
\begin{equation*}
\bz_k\weakly \bar{\bz} \in \Fix\tc, 
\end{equation*}
\begin{equation*}
M\bz_{k} = J_{\bA}\bz_{k} \weakly J_{\bA}\bar{\bz}\in\zer(\bA+\bB).
\end{equation*}
\end{fact}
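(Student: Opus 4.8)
The statement naturally splits into four tasks: (a) establishing that $\bA$ and $\bB$ are maximally monotone and computing their resolvents; (b) identifying $\zer(\bA+\bB)$; (c) recognizing $\fc$ as a Douglas--Rachford operator and deducing nonexpansiveness of $\tc$; and (d) proving weak convergence of the governing and shadow sequences. The plan is to obtain (a) by a direct resolvent computation (which yields maximal monotonicity for free via Minty's theorem), to obtain (b) by a short inclusion chase, and then to reduce (c) and (d) entirely to the classical theory of the Douglas--Rachford iteration.

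For the resolvents, $\bB$ is the product (separable sum) of the $A_i$, so $J_{\bB}$ acts coordinatewise, $J_{\bB}(x_1,\ldots,x_{n-1}) = (J_{A_1}x_1,\ldots,J_{A_{n-1}}x_{n-1})$, straight from the definition of the resolvent. The operator $\bA$ is the crux. To compute $J_{\bA}$ I would solve $x \in p + \bA p$ for $p$. Since $\bA$ contains the summand $N_\Delta$, the inclusion forces $p\in\Delta$, i.e.\ $p = Eu$ for some $u\in X$, and then $N_\Delta(Eu) = \Delta^\perp = \{(d_1,\ldots,d_{n-1})\mid \sum_i d_i = 0\}$. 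Writing the inclusion coordinatewise as $x_i = u + \tfrac{1}{n-1}w + d_i$ with $w\in A_nu$ and $\sum_i d_i = 0$, and then summing the $n-1$ coordinates, the $\Delta^\perp$-part cancels and one is left with $\sum_i x_i = (n-1)u + w$; this decouples the system and yields $\tfrac{1}{n-1}\sum_i x_i \in (\Id + \tfrac{1}{n-1}A_n)u$, that is, $u = J_{\frac{1}{n-1}A_n}(\tfrac{1}{n-1}\sum_i x_i)$, which is exactly \cref{e:f:Ca}. Because this solution exists and is unique for every $x\in X^{n-1}$, the operator $\Id + \bA$ is surjective; combined with the obvious monotonicity of $\bA$ (a positive multiple of a product operator plus a normal cone), Minty's characterization gives maximal monotonicity of $\bA$, while maximal monotonicity of $\bB$ is the standard product statement.

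For (b), the same mechanism applies: $\bz\in\zer(\bA+\bB)$ again forces $\bz = Eu$, and $0\in\bA(Eu)+\bB(Eu)$ unpacks to the existence of selections $a_i\in A_iu$ ($1\le i\le n-1$), $w\in A_nu$, and $d\in\Delta^\perp$ with $0 = \tfrac{1}{n-1}w + d_i + a_i$. Summing over $i$ kills $d$ and collapses everything to $w + \sum_{i=1}^{n-1}a_i = 0$, i.e.\ $0\in(A_1+\cdots+A_n)u$; the converse direction is an explicit construction of the $\Delta^\perp$-component $d_i := -\tfrac{1}{n-1}w - a_i$ from a zero $u$ of the sum. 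This proves $\zer(\bA+\bB) = E(\zer(A_1+\cdots+A_n))$. For (c), I would verify the purely algebraic identity $\thalb\Id + \thalb\tc = \Id - J_{\bA} + J_{\bB}R_{\bA}$ by substituting $\tc = R_{\bB}R_{\bA}$ together with $R_{\bA}=2J_{\bA}-\Id$ and $R_{\bB}=2J_{\bB}-\Id$; the right-hand side is by definition the Douglas--Rachford operator for the pair $(\bA,\bB)$, which is firmly nonexpansive because $\bA,\bB$ are maximally monotone, whence $\tc = 2\fc - \Id$ is nonexpansive.

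Finally, for (d), the recursion \cref{e:basicC} is a relaxed Krasnoselskii--Mann iteration of the nonexpansive $\tc$ (equivalently a relaxed firmly nonexpansive iteration of $\fc$ with parameter $2\lambda\in\,\opint{0,2}$). Since the solvability hypothesis on \cref{e:Cprob} together with (b) guarantees $\zer(\bA+\bB)\neq\emp$ and hence $\Fix\tc = \Fix\fc\neq\emp$, the classical Douglas--Rachford convergence theorem yields $\bz_k\weakly\bar{\bz}\in\Fix\tc$ and weak convergence of the shadow sequence $J_{\bA}\bz_k \weakly J_{\bA}\bar{\bz}\in\zer(\bA+\bB)$. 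The main obstacle is the resolvent computation for $\bA$ in (a): the summation-over-coordinates trick that exploits the structure $N_\Delta(Eu)=\Delta^\perp$ is the one genuinely nonroutine step, and it is also the engine that drives the zero-set identification in (b); everything after that is an invocation of standard maximally monotone and Douglas--Rachford machinery.
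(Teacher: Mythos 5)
Your proof is correct in its essential strategy, but note first that the paper itself offers no proof of this statement: it is labelled a \emph{Fact}, and the paper's entire argument is the citation of \cite[Theorem~3.3(i)--(iii) and Theorem~5.1]{Campoy} together with \cite[Theorem~26.11]{BC2017}. What you have written is a self-contained reconstruction of exactly those two ingredients: your parts (a)--(b) re-derive Campoy's resolvent formulas and the identity $\zer(\bA+\bB)=E\big(\zer(A_1+\cdots+A_n)\big)$, while your parts (c)--(d) reduce the convergence claims to classical Douglas--Rachford theory (the algebraic identity $\thalb\Id+\thalb R_{\bB}R_{\bA}=\Id-J_{\bA}+J_{\bB}R_{\bA}$, firm nonexpansiveness of $\fc$, nonemptiness of $\Fix\tc$ from solvability of \cref{e:Cprob} via (b), and the Krasnoselskii--Mann relaxation with parameter $2\lambda$, $0<2\lambda<2$), which is precisely the content of \cite[Theorem~26.11]{BC2017}. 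So your route coincides in architecture with the sources the paper points to, and it has the merit of being self-contained; the summation-over-coordinates trick you single out is indeed the one nonroutine step.

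One imprecision needs repair. When you solve $\bx\in\bp+\bA\bp$, you write the coordinatewise inclusion with a \emph{single} $w\in A_nu$, but $\tfrac{1}{n-1}\big(A_nx_1,\ldots,A_nx_{n-1}\big)$ evaluated at $\bp=Eu$ is the product set $\tfrac{1}{n-1}\big(A_nu\times\cdots\times A_nu\big)$, so the correct statement is $x_i=u+\tfrac{1}{n-1}w_i+d_i$ with \emph{independent} selections $w_i\in A_nu$. Your summation argument survives, because after summing one obtains $\tfrac{1}{n-1}\sum_i x_i\in u+\tfrac{1}{n-1}\big(\tfrac{1}{n-1}\sum_i w_i\big)$ and the average $\tfrac{1}{n-1}\sum_{i=1}^{n-1}w_i$ again lies in $A_nu$ --- but this uses the standard fact that the values of a maximally monotone operator are (closed and) convex, which you should invoke explicitly; the same issue, with the same fix, occurs in the forward inclusion of your part (b). For the resolvent formula alone you can even sidestep it: your existence construction (with a common $w$) already shows $\ran(\Id+\bA)=X^{n-1}$, Minty then gives maximal monotonicity, and since resolvents of monotone operators are single-valued, the solution you exhibit must be \emph{the} value $J_{\bA}\bx=Eu$. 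For the inclusion $\zer(\bA+\bB)\subseteq E\big(\zer(A_1+\cdots+A_n)\big)$, however, the convexity of $A_nu$ (or an equivalent argument) is genuinely needed. With that one-line repair your proof is complete.
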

\begin{proof}
See \cite[Theorem~3.3(i)--(iii) and Theorem~5.1]{Campoy}, and \cite[Theorem~26.11]{BC2017}.
\end{proof}

If one wishes to avoid the product space reformulation, then one may set
\begin{equation*}
p_k:=J_{\frac{1}{n-1}A_n}\Big(\frac{1}{n-1}\sum_{i=1}^{n-1} z_{k,i}\Big)
\end{equation*}
so that $J_{\bA}(\bz_k)=E(p_k)$. 
Now for $i\in \{1,\dots,n-1\}$, we define
\begin{equation*}
x_{k,i}:=J_{A_i}(2p_k-z_{k,i}).
\end{equation*}
It follows that
\begin{equation*}
\begin{pmatrix}
x_{k,1}\\\vdots\\x_{k,n-1}
\end{pmatrix}=J_{\bB}(R_{\hrulefill\bA}(\bz)).
\end{equation*}
Now one can rewrite Campoy's algorithm as follows (and this is
his original formulation):
Given a starting point $(z_{0,1},\ldots,z_{0,n-1})\in X^{n-1}$ and $k\in\NN$, 
update 
\begin{subequations}
\begin{align}
p_k&=J_{\frac{1}{n-1}A_n}\bigg(\frac{1}{n-1}\sum_{i=1}^{n-1} z_{k,i}\bigg),\\
(\forall i\in \{1,\ldots,n-1\})\qquad 
x_{k,i}&=J_{A_i}(2p_k-z_{k,i}),\\
(\forall i\in \{1,\ldots,n-1\})\quad z_{k+1,i}&=z_{k,i}+\lambda(x_{k,i}-p_k),\label{e:CTalt}
\end{align}
\end{subequations}
which gives us the equivalence of \cref{e:basicC} and \cref{e:CTalt}.

For future use in \cref{sec:matrix}, we bring the Campoy operator into a framework 
similar to the other algorithms. As before, we denote by $Q_1\colon X^{n-1} \to X\colon 
(x_1,\ldots,x_{n-1})\mapsto x_1$
and similarly for $Q_2,\ldots,Q_n$. 
We recall from \cref{e:f:Ca} that  
\begin{subequations}
\begin{align}
\label{e:CampoyM}
M &: 
X^{n-1}\to X^{n-1} : 
\bz = 
\begin{pmatrix}
z_{1}\\
\vdots\\
z_{n-1}
\end{pmatrix}
\mapsto
\begin{pmatrix}
J_{\frac{1}{n-1}A_n}\Big(\frac{1}{n-1}\sum_{i=1}^{n-1}z_{i}\Big)\\
\vdots\\
J_{\frac{1}{n-1}A_n}\Big(\frac{1}{n-1}\sum_{i=1}^{n-1}z_{i}\Big)
\end{pmatrix}, 
\end{align}
and we set
\begin{align}
\label{e:CampoyS}
S&:X^{n-1}\to X^{n-1}:
\bz = \begin{pmatrix}
z_{1}\\
\vdots\\
z_{n-1}
\end{pmatrix}
\mapsto
\begin{pmatrix}
x_{1}\\
\vdots\\
x_{n-1}
\end{pmatrix},
\quad\text{where}\;\;
\\[+2mm]
&(\forall i\in\{1,\ldots,n-1\})
\;\;
x_{i} = J_{A_i}(2Q_iM\bz- z_{i}). 
\end{align}
\end{subequations}
We can then rewrite the \emph{Campoy operator} in \cref{e:CT} as
 \begin{equation}
 \label{e:CampoyT}
 T = \tc \colon X^{n-1}\to X^{n-1}\colon 
 \bz\mapsto 
 \bz+ 2S\bz - 2M\bz. 
 \end{equation}

\section{Main results}

We are now ready to tackle our main results.
We shall find useful descriptions of the fixed point sets
of the Ryu, the Malitsky-Tam, and the Campoy operators.
These description will allow us to deduce strong convergence of the 
iterates to the projection
onto the intersection. 

\label{sec:main}

\subsection{Ryu splitting}

In this subsection, we assume that 
\begin{equation*}
\text{
$U,V,W$ are closed linear subspaces of $X$.
}
\end{equation*}
We set
\begin{equation*}
A := N_U,
\;\;
B := N_V,
\;\;
C := N_{W}. 
\end{equation*}
Then
\begin{equation*}
Z := \zer(A+B+C) = U\cap V\cap W.
\end{equation*}
Using linearity of the projection operators, the operator $M$ defined in \cref{e:genM} turns into
\begin{equation}
\label{e:linM}
M\colon X\times X\to X\times X\times X\colon
\begin{pmatrix}
x\\
y
\end{pmatrix}
\mapsto
\begin{pmatrix}
P_Ux\\[+1mm]
P_VP_Ux+P_Vy\\[+1mm]
P_WP_Ux + \textcolor{black}{P_W}P_VP_Ux-P_Wx \textcolor{black}{+}P_WP_Vy-P_Wy
\end{pmatrix},
\end{equation}
while the Ryu operator is still (see \cref{e:TRyu})
\begin{equation}
\label{e:linT}
T := \tryu \colon X^2\to X^2\colon 
z\mapsto 
z + \big((Q_3-Q_1)Mz,(Q_3-Q_2)Mz\big). 
\end{equation}

We now determine the fixed point set of the Ryu operator.

\begin{lemma}
\label{firelemma}
Let $(x,y)\in X\times X$. 
Then 
\begin{equation}
\label{e:210815b}
\Fix T = \big(Z\times\{0\}\big) \oplus 
\Big(\big(U^\perp\times V^\perp) \cap \big(\Delta^\perp+(\{0\}\times W^\perp) \big)\Big),
\end{equation}
where $\Delta := \menge{(x,x)\in X\times X}{x\in X}$. 
Consequently, setting 
\begin{equation*}
E:=\big(U^\perp\times V^\perp) \cap \big(\Delta^\perp+(\{0\}\times W^\perp) \big),
\end{equation*}
we have 
\begin{equation}
\label{e:210815c}
P_{\Fix T}(x,y) = (P_{Z}x,0)\oplus P_E(x,y) \in (P_Zx\oplus U^\perp)\times V^\perp. 
\end{equation}
\end{lemma}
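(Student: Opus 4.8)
The plan is to start from the explicit description of $\Fix\tryu$ in \cref{f:Ryu} and specialize it to the present setting. Since $A=N_U$, $B=N_V$, $C=N_W$, the resolvents are the projections $J_A=P_U$, $J_B=P_V$, $J_C=P_W$, and $R_A=2P_U-\Id$. Substituting these into \cref{e:fixtryu} shows that $(x,y)\in\Fix T$ exactly when, writing $p:=P_Ux$,
\begin{equation*}
p=P_V(p+y)\qquad\text{and}\qquad p=P_W(2P_Ux-x-y).
\end{equation*}
The engine of the whole argument is the elementary identity $P_Mw=m\iff m\in M$ and $w-m\in M^\perp$, valid for a closed subspace $M$. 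First I would apply this to the two displayed equations: the first is equivalent to $p\in V$ together with $y\in V^\perp$ (here $w-m=(p+y)-p=y$); the second, after simplifying the residual $(2P_Ux-x-y)-p=-(P_{U^\perp}x+y)$, is equivalent to $p\in W$ together with $P_{U^\perp}x+y\in W^\perp$. Since automatically $p=P_Ux\in U$, these conditions together say precisely that $P_Ux\in U\cap V\cap W=Z$, that $y\in V^\perp$, and that $P_{U^\perp}x+y\in W^\perp$.

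Next I would rewrite the target set $E$ in a coordinate-friendly form. A direct computation gives $\Delta^\perp=\menge{(a,-a)}{a\in X}$, so
\begin{equation*}
\Delta^\perp+(\{0\}\times W^\perp)=\menge{(a,b)\in X\times X}{a+b\in W^\perp},
\end{equation*}
and therefore $E=\menge{(e_1,e_2)}{e_1\in U^\perp,\ e_2\in V^\perp,\ e_1+e_2\in W^\perp}$. With this description the two inclusions of \cref{e:210815b} become bookkeeping. For ``$\subseteq$'', given $(x,y)\in\Fix T$ I would use the orthogonal decomposition $x=P_Ux+P_{U^\perp}x$: the conditions above give $P_Ux\in Z$, while $(P_{U^\perp}x,y)$ satisfies the three membership requirements defining $E$, so $(x,y)=(P_Ux,0)+(P_{U^\perp}x,y)\in(Z\times\{0\})+E$. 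For ``$\supseteq$'', I would start from $(z,0)+(e_1,e_2)$ with $z\in Z$ and $(e_1,e_2)\in E$, put $x:=z+e_1$ and $y:=e_2$, note $P_Ux=z$ and $P_{U^\perp}x=e_1$, and read the three $E$-conditions back into the two fixed-point equations. Orthogonality $Z\times\{0\}\perp E$ is then immediate: for $(z,0)\in Z\times\{0\}$ and $(e_1,e_2)\in E\subseteq U^\perp\times V^\perp$ one has $\scal{(z,0)}{(e_1,e_2)}=\scal{z}{e_1}=0$ because $z\in Z\subseteq U$ and $e_1\in U^\perp$. Hence the sum is a genuine orthogonal direct sum and \cref{e:210815b} follows.

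For the projection formula \cref{e:210815c} I would invoke \cref{f:orthoP}: once $\Fix T=(Z\times\{0\})\oplus E$ is known to be an orthogonal sum of closed subspaces, the projection splits as $P_{\Fix T}=P_{Z\times\{0\}}\oplus P_E$, and $P_{Z\times\{0\}}(x,y)=(P_Zx,0)$ yields the displayed identity. The concluding membership is then automatic: $P_E(x,y)\in E\subseteq U^\perp\times V^\perp$ forces $P_{\Fix T}(x,y)\in(P_Zx+U^\perp)\times V^\perp$, with $P_Zx\in Z\subseteq U$ orthogonal to $U^\perp$.

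The step I expect to require the most care is not the algebra but the \emph{closedness} of $E$, which is needed both to speak of $P_E$ and to apply \cref{f:orthoP}: in infinite dimensions a sum of two closed subspaces such as $\Delta^\perp+(\{0\}\times W^\perp)$ need not be closed. The coordinate description resolves this cleanly, since $E=\menge{(e_1,e_2)}{e_1\in U^\perp,\ e_2\in V^\perp,\ e_1+e_2\in W^\perp}$ exhibits $E$ as the intersection of preimages of closed subspaces under continuous linear maps, hence a closed subspace. The only other delicate point is the correct simplification of the second fixed-point equation's residual to $-(P_{U^\perp}x+y)$, which must be tracked carefully so as to land on exactly the $W^\perp$-condition appearing in $E$.
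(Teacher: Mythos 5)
Your proof is correct and follows essentially the same route as the paper's: both specialize the fixed-point characterization \cref{e:fixtryu} to the three conditions $P_Ux\in Z$, $y\in V^\perp$, $P_{U^\perp}x+y\in W^\perp$, identify $\Delta^\perp+(\{0\}\times W^\perp)$ with $\menge{(a,b)\in X\times X}{a+b\in W^\perp}$ (the paper does this via the operator $S\colon(x,y)\mapsto x+y$ and $S^{-1}(W^\perp)$), and conclude with \cref{f:orthoP}. The only cosmetic difference is the closedness argument: you obtain it directly because the set is the preimage of a closed subspace under a continuous linear map, whereas the paper additionally derives it from the closedness of $\Delta+(X\times W)=X\times X$ via \cite[Corollary~15.35]{BC2017} --- both are valid.
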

\begin{proof}
Note that $(x,y)=(P_{W^\perp}y+(x-P_{W^\perp}y),P_{W^\perp}y+P_Wy)
=(P_{W^\perp}y,P_{W^\perp}y)+(x-P_{W^\perp}y,P_Wy)
\in \Delta + (X\times W)$.
Hence 
\begin{equation*}
X\times X = \Delta+(X\times W)\;\;\text{is closed;}
\end{equation*}
consequently, by, e.g., \cite[Corollary~15.35]{BC2017}, 
\begin{equation}
\label{e:210815a}
\Delta^\perp + (\{0\}\times W^\perp)\;\;\text{is closed.}
\end{equation}
Next, using \cref{e:fixtryu}, we have the equivalences 
\begin{align*}
&\hspace{-1cm}(x,y)\in\Fix\tryu\\
&\Leftrightarrow
P_Ux = P_V\big(P_Ux+y\big) = P_W\big(R_Ux-y\big)\\
&\Leftrightarrow
P_Ux\in Z 
\;\land\; 
y\in V^\perp
\;\land\; 
P_Ux = P_W\big(P_Ux-P_{U^\perp}x-y\big)\\
&\Leftrightarrow
x\in Z + U^\perp 
\;\land\; 
y\in V^\perp
\;\land\; 
P_{U^\perp}x + y \in W^\perp. 
\end{align*}
Now define the linear operator 
$S\colon X\times X\to X\colon (x,y)\mapsto x+y$.
Hence 
\begin{align*}
\Fix \tryu
&= 
\menge{(x,y)\in (Z+U^\perp)\times V^\perp}{P_{U^\perp}x+y\in W^\perp}\\
&=
\menge{(z+u^\perp,v^\perp)}{z\in Z,\,u^\perp\in U^\perp,\,v^\perp\in V^\perp, 
\,u^\perp + v^\perp\in W^\perp}\\
&=(Z\times\{0\})
\oplus \big((U^\perp\times V^\perp) \cap S^{-1}(W^\perp)\big). 
\end{align*}
On the other hand,
$S^{-1}(W^\perp) = (\{0\}\times W^\perp)+\ker S
= (\{0\}\times W^\perp)+\Delta^\perp$ is closed by \cref{e:210815a}.
Altogether,
\begin{equation*}
\Fix \tryu
= (Z\times\{0\})
\oplus \big((U^\perp\times V^\perp) \cap 
((\{0\}\times W^\perp)+\Delta^\perp)\big), 
\end{equation*}
i.e., 
\cref{e:210815b} holds. 
Finally, \cref{e:210815c} follows from \cref{f:orthoP}.
\end{proof}

We are now ready for the main convergence result on Ryu's algorithm.

\begin{theorem} {\bf (main result on Ryu splitting)} 
Given $0<\lambda<1$ and $(x_0,y_0)\in X\times X$, generate
the sequence $(x_k,y_k)_\kkk$ via\footnote{Recall \cref{e:linM} and \cref{e:linT}
for the definitions of $M$ and $T$.}
\begin{equation*}
(\forall\kkk)\quad (x_{k+1},y_{k+1}) := (1-\lambda)(x_k,y_k)+\lambda T(x_k,y_k).
\end{equation*}
Then 
\begin{equation}
\label{e:210815e}
M(x_k,y_k)\to \big(P_Z(x_0),P_Z(x_0),P_Z(x_0)\big);
\end{equation}
in particular,
\begin{equation*}
P_U(x_k)\to P_Z(x_0).
\end{equation*}
\end{theorem}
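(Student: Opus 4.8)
The plan is to exploit the linearity of the Ryu operator in this subspace setting, which upgrades the generic weak convergence of \cref{f:Ryu} to strong convergence through the linear--averaged machinery of \cref{c:key}. Concretely, I set $T_\lambda := (1-\lambda)\Id+\lambda T$, so that the generated sequence is $(x_k,y_k)=T_\lambda^k(x_0,y_0)$. Since $T=\tryu$ is nonexpansive by \cref{f:Ryu} and is \emph{linear} here (it is assembled from the linear projections $P_U,P_V,P_W$ via \cref{e:linM} and \cref{e:linT}), the operator $T_\lambda$ is linear, and since $0<\lambda<1$ it is $\lambda$-averaged. Moreover $\Fix T_\lambda=\Fix T$, because $T_\lambda z=z$ forces $\lambda(Tz-z)=0$ and hence $Tz=z$. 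Applying \cref{c:key} with $L:=T_\lambda$ then gives the strong convergence
\begin{equation*}
(x_k,y_k)\to(\bar x,\bar y):=P_{\Fix T}(x_0,y_0).
\end{equation*}

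Next I would push this limit through $M$. As $M$ in \cref{e:linM} is a bounded linear operator, $M(x_k,y_k)\to M(\bar x,\bar y)$, so it remains only to evaluate $M$ at the fixed point $(\bar x,\bar y)$. By the characterization of $\Fix\tryu$ in \cref{e:fixtryu}, any fixed point satisfies $J_A(x)=J_B(J_A(x)+y)=J_C(R_A(x)-y)$; here $J_A=P_U$, and the three coordinates of $M$ are precisely these three now-equal quantities. Hence $M(\bar x,\bar y)=(P_U\bar x,P_U\bar x,P_U\bar x)$, and the task collapses to identifying the single vector $P_U\bar x$.

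Finally, I would extract $P_U\bar x$ from the explicit projection formula \cref{e:210815c} of \cref{firelemma}, which places $\bar x\in P_Z(x_0)\oplus U^\perp$, i.e.\ $\bar x=P_Z(x_0)+u^\perp$ for some $u^\perp\in U^\perp$. Because $P_Z(x_0)\in Z\subseteq U$, linearity of $P_U$ yields $P_U\bar x=P_U(P_Z(x_0))+P_Uu^\perp=P_Z(x_0)$. Combining the three steps gives $M(x_k,y_k)\to(P_Z(x_0),P_Z(x_0),P_Z(x_0))$, which is \cref{e:210815e}; the ``in particular'' statement $P_U(x_k)\to P_Z(x_0)$ then drops out by reading the first coordinate, since the first component of $M$ is $P_U$. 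I expect the strong-convergence step to be the only real obstacle, and it is exactly where linearity is essential: \cref{c:key} has no counterpart for merely nonexpansive averaged maps, so the whole improvement over \cref{f:Ryu} rests on the operators being normal cones of subspaces; the subsequent identification of the limit is routine bookkeeping from \cref{firelemma}.
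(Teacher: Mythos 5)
Your proof is correct, and its skeleton coincides with the paper's: both set $T_\lambda=(1-\lambda)\Id+\lambda T$, invoke \cref{c:key} (linearity plus averagedness) to get strong convergence of $(x_k,y_k)$ to $P_{\Fix T}(x_0,y_0)$, and then read off $P_U\bar{x}=P_Zx_0$ from the decomposition \cref{e:210815c} of \cref{firelemma}. Where you diverge is the last step, i.e.\ how convergence of the \emph{second and third} coordinates of $M(x_k,y_k)$ is obtained. The paper first shows $Q_1M(x_k,y_k)=P_Ux_k\to P_Zx_0$ and then imports the asymptotic regularity statement \cref{e:210815d} from \cref{f:Ryu} to force $Q_2M$ and $Q_3M$ to the same limit. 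You instead push the limit through $M$ directly --- legitimate, since in this subspace setting $M$ of \cref{e:linM} is a bounded linear operator --- and evaluate $M$ at the limiting fixed point, where \cref{e:fixtryu} (equivalently, the very definition of $T$ in \cref{e:TRyu}, which gives $M(\Fix T)\subseteq\Delta$) makes all three coordinates equal to $P_U\bar{x}$. One small point worth making explicit in your step: the third coordinate of $M$ is $J_C\big(J_A(x)-x+J_B(J_A(x)+y)-y\big)$, which equals $J_C(R_A(x)-y)$ only \emph{after} substituting the fixed-point identity $J_B(J_A(x)+y)=J_A(x)$; with that substitution your identification is exact. The trade-off: your route is slightly more self-contained (it needs only the nonexpansiveness and fixed-point characterization from \cref{f:Ryu}, not the convergence assertion \cref{e:210815d}), while the paper's route avoids any appeal to continuity of $M$ and works verbatim even in settings where one cannot pass limits through $M$.
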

\begin{proof}
Set $T_\lambda := (1-\lambda)\Id+\lambda T$ and observe that 
$(x_k,y_k)_\kkk = (T^k_\lambda(x_0,y_0))_\kkk$. 
Hence, by \cref{c:key} and \cref{e:210815c}
\begin{align*}
(x_k,y_k)&\to P_{\Fix T_\lambda}(x_0,y_0)=P_{\Fix T}(x_0,y_0)\\
&=(P_Zx_0,0)+P_E(x_0,y_0)\in (P_Zx_0\oplus U^\perp)\times V^\perp,
\end{align*}
where $E$ is as in \cref{firelemma}. 
Hence 
$Q_1M(x_k,y_k) = P_Ux_k \to P_U(P_Zx_0) = P_Zx_0$. 
Now \cref{e:210815d} yields
\begin{equation*}
\lim_{k\to\infty}Q_1M(x_k,y_k)=\lim_{k\to\infty}Q_2M(x_k,y_k)
= \lim_{k\to\infty}Q_3M(x_k,y_k)= P_Zx_0, 
\end{equation*}
i.e., \cref{e:210815e} and we're done.
\end{proof}

\subsection{Malitsky-Tam splitting}

\label{ss:MTlin}

Let $n\in\{3,4,\ldots\}$. 
In this subsection, we assume that 
$U_1,\ldots,U_n$ are closed linear subspaces of $X$.
We set
\begin{equation*}
(\forall i\in\{1,2,\ldots,n\})
\quad 
A_i := N_{U_i} \;\;\text{and}\;\;
P_i := P_{U_i}. 
\end{equation*}
Then
\begin{equation*}
Z := \zer(A_1+\cdots+A_n) = U_1\cap \cdots \cap U_n.
\end{equation*}
The operator $M$ defined in \cref{e:MTM} turns into
\begin{subequations}
\label{e:linMTM}
\begin{align}
M\colon X^{n-1}
&\to 
X^n\colon
\begin{pmatrix}
z_1\\
\vdots\\
z_{n-1}
\end{pmatrix}
\mapsto
\begin{pmatrix}
x_1\\
\vdots\\
x_{n-1}\\
x_n
\end{pmatrix},
\quad\text{where}\;\;
\\[+2mm]
&(\forall i\in\{1,\ldots,n\})
\;\;
x_i = \begin{cases}
P_{1}(z_1), &\text{if $i=1$;}\\
P_{i}(x_{i-1}+z_i-z_{i-1}), &\text{if $2\leq i\leq n-1$;}\\
P_{n}(x_1+x_{n-1}-z_{n -1}), &\text{if $i=n$}
\end{cases}
\end{align}
\end{subequations}
and the MT operator remains (see \cref{e:TMT}) 
\begin{equation}
\label{e:linMT}
T := \tmt \colon X^{n-1}\to X^{n-1}\colon 
\bz\mapsto 
\bz+ 
\begin{pmatrix}
(Q_2-Q_1)M\bz\\
(Q_3-Q_2)M\bz\\
\vdots\\
(Q_n-Q_{n-1})M\bz
\end{pmatrix}. 
\end{equation}

We now determine the fixed point set of the Malitsky-Tam operator. 

\begin{lemma}
\label{mtfixlemma}
The fixed point set of the MT operator $T=\tmt$ is 
\begin{align}
\label{e:210817e}
\Fix T &= \menge{(z,\ldots,z)\in X^{n-1}}{z\in Z} \oplus E,
\end{align}
where 
\begin{subequations}
\label{e:bloodyE}
\begin{align}
E &:= 
\ran\Psi \cap \big(X^{n-2}\times U_n^\perp)\\
&\subseteq 
U_1^\perp \times 
\cdots
\times (U_1^\perp+\cdots+U_{n-2}^\perp)
\times \big((U_1^\perp+\cdots+U_{n-1}^\perp)\cap U_n^\perp\big)
\end{align}
\end{subequations}
and 
\begin{subequations}
\label{e:bloodyPsi}
\begin{align}
\Psi\colon U_1^\perp \times \cdots \times U_{n-1}^\perp
&\to X^{n-1}\\
(y_1,\ldots,y_{n-1})&\mapsto 
(y_1,y_1+y_2,\ldots,y_1+y_2+\cdots+y_{n-1})
\end{align}
\end{subequations}
 is the continuous linear partial sum operator which has closed range. 

Let $\bz = (z_1,\ldots,z_{n-1})\in X^{n-1}$,
and set $\bar{z} := (z_1+z_2+\cdots+z_{n-1})/(n-1)$.
Then
\begin{equation}
\label{e:210817d}
P_{\Fix T}\bz = (P_Z\bar{z},\ldots,P_Z\bar{z}) \oplus P_E\bz \in X^{n-1}
\end{equation}
and hence 
\begin{equation}
\label{e:210817g}
P_1(Q_1P_{\Fix T})\bz = P_Z\bar{z}.
\end{equation}

\end{lemma}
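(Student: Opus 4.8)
The plan is to follow the template of \cref{firelemma}: begin from the characterization $\Fix T = \menge{\bz\in X^{n-1}}{M\bz\in\Delta}$ provided by \cref{f:MT}, and unwind the recursion \cref{e:linMTM} at a fixed point. Writing $w$ for the common value of the coordinates of $M\bz\in\Delta$, the three cases of the recursion force $w=P_1z_1$, $w=P_i(w+z_i-z_{i-1})$ for $2\le i\le n-1$, and (since $x_1=x_{n-1}=w$) $w=P_n(2w-z_{n-1})$. Reading each equation as ``a projection onto $U_i$ equals a prescribed point'' converts the system into the equivalent conditions $w\in U_1\cap\cdots\cap U_n=Z$ together with $z_1-w\in U_1^\perp$, $z_i-z_{i-1}\in U_i^\perp$ for $2\le i\le n-1$, and $w-z_{n-1}\in U_n^\perp$.

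Next I would change variables via $y_1:=z_1-w$ and $y_i:=z_i-z_{i-1}$ for $2\le i\le n-1$, so that $(y_1,\ldots,y_{n-1})\in U_1^\perp\times\cdots\times U_{n-1}^\perp$ and, telescoping, $\bz=(w,\ldots,w)+\Psi(y_1,\ldots,y_{n-1})$ with $\Psi$ as in \cref{e:bloodyPsi}. The leftover constraint $w-z_{n-1}\in U_n^\perp$ reads $-(y_1+\cdots+y_{n-1})\in U_n^\perp$, i.e.\ the last coordinate of $\Psi(y_1,\ldots,y_{n-1})$ lies in $U_n^\perp$, which is exactly membership of $\Psi(y_1,\ldots,y_{n-1})$ in $X^{n-2}\times U_n^\perp$. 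This identifies $\Fix T$ as the set $D:=\menge{(z,\ldots,z)}{z\in Z}$ summed with $E=\ran\Psi\cap(X^{n-2}\times U_n^\perp)$, and the component-wise inclusion \cref{e:bloodyE} is read off from the fact that the $k$-th coordinate of any element of $\ran\Psi$ is $y_1+\cdots+y_k\in U_1^\perp+\cdots+U_k^\perp$. The sum is orthogonal, so the symbol $\oplus$ is justified: for $w\in Z$ and $y_j\in U_j^\perp$ one has $\scal{w}{y_1+\cdots+y_k}=0$ since $w\in U_j$ and $y_j\in U_j^\perp$ for each $j$, whence $\scal{(w,\ldots,w)}{\Psi(y_1,\ldots,y_{n-1})}=0$. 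Closedness of $\ran\Psi$, hence of $E$, follows because $\Psi$ is the restriction to a closed subspace of the partial-sum operator on $X^{n-1}$, a topological isomorphism whose inverse is the difference operator.

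For the projection formulas I would apply \cref{f:orthoP} to the orthogonal decomposition $\Fix T=D\oplus E$, giving $P_{\Fix T}=P_D+P_E$. To compute $P_D\bz$, I minimize $\sum_{i=1}^{n-1}\norm{z_i-w}^2$ over $w\in Z$; completing the square rewrites this objective as a constant plus $(n-1)\norm{w-\bar z}^2$, so the minimizer is $w=P_Z\bar z$ and $P_D\bz=(P_Z\bar z,\ldots,P_Z\bar z)$, which is \cref{e:210817d}. Finally, \cref{e:210817g} drops out because $Q_1P_E\bz\in U_1^\perp$ while $P_Z\bar z\in Z\subseteq U_1$, so applying $P_1$ annihilates the former summand and fixes the latter. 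The main obstacle is purely the bookkeeping in unwinding the coupled recursion---in particular the wrap-around coordinate $x_n=P_n(x_1+x_{n-1}-z_{n-1})$ linking $x_1$ to $x_{n-1}$---to see that at a fixed point all coordinates of $M\bz$ collapse to a single $w\in Z$ and the residual differences reassemble precisely into $\Psi$.
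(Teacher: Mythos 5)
Your proposal is correct and follows essentially the same route as the paper's own proof: unwind the recursion defining $M$ at a fixed point into the conditions $w\in Z$, $z_1-w\in U_1^\perp$, $z_i-z_{i-1}\in U_i^\perp$ ($2\le i\le n-1$), $w-z_{n-1}\in U_n^\perp$, reassemble these via $\Psi$ into the orthogonal decomposition $\Fix T = D\oplus E$, and apply \cref{f:orthoP} to split the projection, finishing with the observation that $Q_1 P_E\bz\in U_1^\perp$ is annihilated by $P_1$. Your only deviations are local and harmless: you obtain $P_D\bz=(P_Z\bar{z},\ldots,P_Z\bar{z})$ by an elementary completing-the-square argument, where the paper instead derives $P_D=P_{Z^{n-1}}P_\Delta$ from \cite[Lemma~9.2]{Deutsch}, and you supply explicit justifications for the closedness of $\ran\Psi$ (via the difference-operator inverse) and for the orthogonality $D\perp E$, points the paper treats as evident.
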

\begin{proof}
Assume temporarily that $\bz\in\Fix T$ and 
set $\bx = M\bz = (x_1,\ldots,x_n)$. 
Then $\bar{x} := x_1=\cdots= x_n$ and so $\bar{x}\in Z$.
Now $P_1z_1=x_1=\bar{x}\in Z$ and thus 
\begin{equation*}
z_1 \in \bar{x}+U_1^\perp \subseteq Z + U_1^\perp.
\end{equation*}
Next, 
$\bar{x}=x_2 = P_2(x_1+z_2-z_1)=
P_2x_1 + P_2(z_2 - z_1)=
P_2\bar{x}+P_2(z_2-z_1) = \bar{x}$,
which implies $P_2(z_2-z_1)=0$ and so 
$z_2-z_1\in U_2^\perp$. 
It follows that 
\begin{equation*}
z_2 \in z_1+U_2^\perp.
\end{equation*}
Similarly, by considering $x_3,\ldots,x_{n-1}$, we obtain 
\begin{equation}
\label{e:210817b}
z_3\in z_2 + U_3^\perp, \ldots, z_{n-1}\in z_{n-2}+U_{n-1}^\perp.
\end{equation}
Finally, 
$\bar{x}=x_n=P_n(x_1+x_{n-1}-z_{n-1})=P_n(\bar{x}+\bar{x}-z_{n-1})
=2\bar{x}-P_nz_{n-1}$,
which implies $P_nz_{n-1}=\bar{x}$, i.e., 
$z_{n-1}\in \bar{x}+U_n^\perp$. 
Combining with \cref{e:210817b}, we see that 
$z_{n-1}$ satisfies
\begin{equation*}
z_{n-1}\in (z_{n-2}+U_{n-1}^\perp)\cap(P_1z_1+U_n^\perp).
\end{equation*}
To sum up, our $\bz\in \Fix T$ must satisfy
\begin{subequations}
\label{e:210817c}
\begin{align}
z_1 &\in Z + U_1^\perp\\
z_2 &\in z_1+U_2^\perp\\
&\;\;\vdots\\
z_{n-2}&\in z_{n-3}+U_{n-2}^\perp\\
z_{n-1}&\in (z_{n-2}+U_{n-1}^\perp)\cap(P_1z_1+U_n^\perp). 
\end{align}
\end{subequations}

We now show the converse. 
To this end, assume now that our $\bz$ satisfies \cref{e:210817c}. 
Note that $Z^\perp = \overline{U_1^\perp+\cdots + U_n^\perp}$.
Because $z_1 \in Z+U_1^\perp$,
there exists $z\in Z$ and $u_1^\perp\in U_1^\perp$ such that 
$z_1 = z\oplus u_1^\perp$.
Hence $x_1=P_1z_1 = P_1z = z$. 
Next, 
$z_2\in z_1+U_2^\perp$, say 
$z_2=z_1+u_2^\perp = z\oplus(u_1^\perp+u_2^\perp)$,
where $u_2^\perp \in U_2^\perp$. 
Then 
$x_2 = P_2(x_1+z_2-z_1)=P_2(z+u_2^\perp)=P_2z = z$. 
Similarly, there exists also $u_3^\perp\in U_3^\perp,
\ldots,u_{n-1}^\perp\in U_{n-1}^\perp$ such that 
$x_3=\cdots=x_{n-1}=z$ and 
$z_i=z\oplus(u_1^\perp+\cdots +u_i^\perp)$ for 
$2\leq i\leq n-1$. 
Finally, we also have $z_{n-1}=z\oplus u_n^\perp$
for some $u_n^\perp\in U_n^\perp$. 
Thus
$x_n = P_n(x_1+x_{n-1}-z_{n-1})=P_n(2z-(z+u_{n}^\perp))
=P_nz = z$. Altogether, $\bz \in \Fix T$.
We have thus verified the description of $\Fix T$ 
announced in \cref{e:210817e}, using the convenient
notation of the operator $\Psi$ which is easily seen to have closed range.

Next, we observe that 
\begin{equation}
\label{e:210817c+}
D := \menge{(z,\ldots,z)\in X^{n-1}}{z\in Z} = 
Z^{n-1}\cap \Delta,
\end{equation}
where $\Delta$ is the diagonal in $X^{n-1}$ which has projection
$P_\Delta(z_1,\ldots,z_n)=(\bar{z},\ldots,\bar{z})$ 
(see, e.g., \cite[Proposition~26.4]{BC2017}). 
By convexity of $Z$, we clearly have 
$P_\Delta(Z^{n-1})\subseteq Z^{n-1}$.
Because $Z^{n-1}$ is a closed linear subspace of $X^{n-1}$,
\cite[Lemma~9.2]{Deutsch} and \cref{e:210817c+} yield
$P_D = P_{Z^{n-1}}P_\Delta$ and therefore
\begin{equation}
\label{e:210817f}
P_D\bz
= P_{Z^{n-1}}P_\Delta\bz
= \big(P_Z\bar{z},\ldots,P_Z\bar{z}\big). 
\end{equation}

Combining \cref{e:210817e}, \cref{f:orthoP}, \cref{e:210817c+},
and \cref{e:210817f} yields \cref{e:210817d}. 

Finally, observe that $Q_1(P_E\bz)\in U_1^\perp$ by 
\cref{e:bloodyE}.
Thus $Q_1(P_{\Fix T}\bz)\in P_Z\bar{z} + U_1^\perp$ 
and \cref{e:210817g} follows.
\end{proof}

We are now ready for the main convergence result on 
the Malitsky-Tam algorithm.

\begin{theorem} {\bf (main result on Malitsky-Tam splitting)} 
Given $0<\lambda<1$ and $\bz_0=(z_{0,1},\ldots,z_{0,n-1}) \in X^{n-1}$, 
generate the sequence $(\bz_k)_\kkk$ via\footnote{Recall \cref{e:linMTM} 
and \cref{e:linMT} for the definitions of $M$ and $T$.}
\begin{equation*}
(\forall\kkk)\quad
\bz_{k+1} := (1-\lambda)\bz_k + \lambda T\bz_k.
\end{equation*}
Set 
\begin{equation*}
p := \frac{1}{n-1}\big(z_{0,1}+\cdots+z_{0,n-1}\big). 
\end{equation*}
Then there exists $\bar{\bz}\in X^{n-1}$ such that 
\begin{equation*}
\bar{\bz}_k \to \bar{\bz} \in \Fix T,
\end{equation*}
and 
\begin{equation}
\label{e:210817h}
M\bz_k \to M\bar{\bz} = (P_Zp,\ldots,P_Zp) \in X^n. 
\end{equation}
In particular, 
\begin{equation}
\label{e:210817i}
P_1(Q_1\bz_k)= Q_1M\bz_k \to P_Z(p) = \tfrac{1}{n-1}P_Z\big(z_{0,1}+\cdots+z_{0,n-1}\big).
\end{equation}
Consequently, if $x_0\in X$ and 
$\bz_0 = (x_0,\ldots,x_0)\in X^{n-1}$,
then 
\begin{equation}
\label{e:yayyay}
P_1Q_1\bz_k \to P_Zx_0.
\end{equation}
\end{theorem}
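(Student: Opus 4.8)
The plan is to mirror the Ryu-splitting argument from the previous subsection, exploiting that in this subspace setting everything is \emph{linear}. First I would observe that, since each $P_i$ is a linear projection, the operator $M$ of \cref{e:linMTM} is a bounded linear operator; hence the MT operator $T$ of \cref{e:linMT} is linear as well, and it is nonexpansive by \cref{f:MT}. Consequently, for $0<\lambda<1$ the relaxed operator $T_\lambda := (1-\lambda)\Id+\lambda T$ is linear and $\lambda$-averaged, and the iterates satisfy $\bz_k = T_\lambda^k\bz_0$. Applying \cref{c:key} then yields
\begin{equation*}
\bz_k \to P_{\Fix T_\lambda}\bz_0 = P_{\Fix T}\bz_0 =: \bar{\bz} \in \Fix T,
\end{equation*}
where the middle equality holds because $T_\lambda\bz=\bz \siff T\bz=\bz$. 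This settles the first convergence claim.

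Next I would identify the limit of $M\bz_k$. Since $M$ is bounded and linear, continuity gives $M\bz_k \to M\bar{\bz}$. Because $\bar{\bz}\in\Fix T$, \cref{e:fixtmt} in \cref{f:MT} forces $M\bar{\bz}\in\Delta$, so all $n$ components of $M\bar{\bz}$ coincide. To pin down their common value I would use the projection identity \cref{e:210817g} of \cref{mtfixlemma}, which (applied to the input $\bz_0$, whose diagonal average is $p$) reads $P_1(Q_1\bar{\bz}) = P_Zp$. Since the definition of $M$ in \cref{e:linMTM} gives $Q_1M\bar{\bz} = P_1(Q_1\bar{\bz})$, the common diagonal value is $P_Zp$; that is, $M\bar{\bz} = (P_Zp,\ldots,P_Zp)$, which is exactly \cref{e:210817h}.

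The two ``in particular'' statements then follow immediately. Reading off the first block and using $Q_1M\bz_k = P_1(Q_1\bz_k)$ (again by the definition of $M$) turns \cref{e:210817h} into \cref{e:210817i}. Finally, for the special choice $\bz_0=(x_0,\ldots,x_0)$ one computes $p = \tfrac{1}{n-1}(x_0+\cdots+x_0)=x_0$, so $P_Zp=P_Zx_0$ and \cref{e:yayyay} drops out.

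The genuine content of the theorem has been absorbed into \cref{mtfixlemma}, so I expect no real obstacle; the one step requiring care is the identification of the common diagonal value. Membership in $\Delta$ only guarantees that the components of $M\bar{\bz}$ agree, and it is the projection identity \cref{e:210817g} — not the fixed-point relation alone — that fixes this value as $P_Zp$. Everything else reduces to continuity of the bounded linear operator $M$ and the linear averaged-iteration convergence supplied by \cref{c:key}.
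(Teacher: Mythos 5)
Your proof is correct, and it reaches \cref{e:210817h} by a genuinely different mechanism than the paper. Both arguments share the same skeleton: $\bz_k=T_\lambda^k\bz_0$ with $T_\lambda$ linear and averaged, so \cref{c:key} gives strong convergence $\bz_k\to P_{\Fix T}\bz_0=\bar{\bz}$, and \cref{mtfixlemma} identifies this limit. The divergence is in how the limit of $M\bz_k$ is identified. The paper first shows only that the \emph{first} coordinate converges, $Q_1M\bz_k=P_1Q_1\bz_k\to P_Zp$ (via the decomposition $P_{\Fix T}\bz_0=(P_Zp,\ldots,P_Zp)\oplus P_E\bz_0$ and $Q_1P_E\bz_0\in U_1^\perp$), and then invokes the asymptotic property \cref{e:210817a} from \cref{f:MT} --- i.e., $(Q_i-Q_j)M\bz_k\to 0$, a convergence assertion imported from the Malitsky--Tam theory --- to propagate this to all $n$ coordinates. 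You instead pass the limit directly through the bounded linear operator $M$, so $M\bz_k\to M\bar{\bz}$, use the purely algebraic fixed-point description \cref{e:fixtmt} to see $M\bar{\bz}\in\Delta$, and then anchor the common diagonal value with \cref{e:210817g}; as you note, this anchoring step is the one place where the fixed-point relation alone is insufficient. Your route is more self-contained in the linear setting: it needs only the algebraic content of \cref{f:MT} (nonexpansiveness and the fixed-point set), not its convergence claims, and it delivers convergence of the whole vector $M\bz_k$ in one stroke. The paper's route, by contrast, mirrors the structure of the general nonlinear argument, where governing sequences converge only weakly and one cannot simply push limits through $M$; there the coordinate-difference asymptotics \cref{e:210817a} is the mechanism that survives. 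In the present setting, where \cref{c:key} supplies \emph{strong} convergence, your continuity argument is perfectly rigorous.
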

\begin{proof}
Set $T_\lambda := (1-\lambda)\Id+\lambda T$ and observe that 
$(\bz_k)_\kkk = (T_\lambda^k\bz)_\kkk$. 
Hence, by \cref{c:key} and \cref{mtfixlemma},
\begin{align*}
\bz_k&\to P_{\Fix T_\lambda}\bz_0=P_{\Fix T}\bz_0\\
&=(P_Zp,\ldots,P_Zp)\oplus P_E(\bz_0), 
\end{align*}
where $E$ is as in \cref{mtfixlemma}. 
Hence, using also \cref{e:210817g}, 
\begin{align*}
Q_1M\bz_k &= P_1Q_1\bz_k \\
&\to P_1Q_1\big((P_Zp,\ldots,P_Zp)\oplus P_E(\bz_0)\big)\\
&=P_1\big(P_Zp+Q_1(P_E(\bz_0))\big)\\
&\in P_1\big(P_Zp+U_1^\perp\big)\\
&=\{P_1P_Zp\}\\
&=\{P_Zp\},
\end{align*}
i.e., $Q_1M\bz_k\to P_Zp$. 
Now \cref{e:210817a} yields 
$Q_iM\bz_k\to P_Zp$ for every $i\in\{1,\ldots,n\}$.
This yields \cref{e:210817h} and \cref{e:210817i}.

The ``Consequently'' part is clear because when 
$\bz_0$ has this special form, then $p=x_0$. 
\end{proof}

\subsection{Campoy splitting}

\label{ss:Clin}

Let $n\in\{3,4,\ldots\}$. 
In this subsection, we assume that 
$U_1,\ldots,U_n$ are closed linear subspaces of $X$.
We set
\begin{equation*}
(\forall i\in\{1,2,\ldots,n\})
\quad 
A_i := N_{U_i} \;\;\text{and}\;\;
P_i := P_{U_i}. 
\end{equation*}
Then
\begin{equation*}
Z := \zer(A_1+\cdots+A_n) = U_1\cap \cdots \cap U_n.
\end{equation*}
By \cref{e:f:C}, 
\begin{subequations}
\begin{align}
J_{\bA}\colon (x_1,\ldots,x_{n-1})&\mapsto E\bigg(P_n\Big(
\tfrac{1}{n-1}\sum_{i=1}^{n-1}x_i\Big)\bigg)\label{e:211012b},\\
J_{\bB} \colon (x_1,\ldots,x_{n-1})&\mapsto 
\big(P_{1}x_1,\ldots,P_{n-1}x_{n-1} \big). \label{e:211012d}
\end{align}
\end{subequations}
Now recall from \cref{e:embed} that $E:x\mapsto (x,\dots,x)$ and denote 
by $\Delta=\{(x,\dots,x)\in X^{n-1}\mid x\in X\}$, which is the diagonal in $X^{n-1}$. 
We are now ready for the following result. 

\begin{lemma}
\label{l:Campoycvg}
Set $\widetilde{U} := E(U_n) = U_n^{n-1}\cap\Delta\subseteq X^{n-1}$ and
$\widetilde{V} := U_1\times\cdots\times U_{n-1}\subseteq X^{n-1}$. 
Then for every $\bz = (z_1,\ldots,z_{n-1})\in X^{n-1}$ and 
$\bar{z}=\tfrac{1}{n-1}\sum_{i=1}^{n-1}z_i$, we have 
\begin{subequations}
\label{e:211012ac}
\begin{align}
\label{e:211012a}
J_{\bA}\bz &=  P_{U_n^{n-1}}P_\Delta\bz = (P_n\bar{z},\ldots,P_n\bar{z})
= P_{\widetilde{U}}\bz,\\
J_{\bB}\bz &= P_{U_1\times\cdots\times U_{n-1}}\bz 
= P_{\widetilde{V}}\bz, \label{e:211012c}\\
 T &= \tc = \Id-2J_{\bA} + 2J_{\bB}R_{\bA}, \label{e:211012f2}
\end{align}
\end{subequations}
and 
\begin{equation}
\label{e:220201a}
\bA = N_{\widetilde{U}}
\;\;\text{and}\;\;
\bB = N_{\widetilde{V}}. 
\end{equation}
Moreover, 
$\widetilde{U}\cap \widetilde{V}= Z^{n-1}\cap\Delta$,
$P_{\Fix T} = P_{\widetilde{U}\cap \widetilde{V}} 
\oplus P_{{\widetilde{U}}^\perp \cap {\widetilde{V}}^\perp}$,
 and 
\begin{equation}
\label{e:211012h}
J_{\bA}P_{\Fix T}\bz = P_{E(Z)}\bz = E(P_Z\bar{z}). 
\end{equation}
\end{lemma}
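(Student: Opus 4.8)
The plan is to exploit that, for normal cones of subspaces, every resolvent is a projection, so that the Campoy operators $\bA$ and $\bB$ are nothing but the normal cone operators of two explicit closed subspaces of $X^{n-1}$, and $\tc$ is the (now linear) Douglas--Rachford reflection operator associated with this pair. Once that is in place, all the asserted identities reduce to standard facts about projections onto subspaces, onto the diagonal, and onto their intersections.

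First I would verify the resolvent formulas. Identity \cref{e:211012c} is immediate: the coordinatewise projection in \cref{e:211012d} is exactly $P_{\widetilde{V}}$ for the product subspace $\widetilde{V}=U_1\times\cdots\times U_{n-1}$. For \cref{e:211012a} I start from \cref{e:211012b}, rewrite $J_{\bA}\bz=(P_n\bar z,\ldots,P_n\bar z)$ as $P_{U_n^{n-1}}P_\Delta\bz$ using $P_\Delta\bz=(\bar z,\ldots,\bar z)$ together with the product-projection formula, and then identify this composition with $P_{\widetilde{U}}$, where $\widetilde{U}=U_n^{n-1}\cap\Delta$: since $U_n$ is a subspace, $P_\Delta$ maps $U_n^{n-1}$ into itself, so by the commuting-projection lemma \cite[Lemma~9.2]{Deutsch} (already used in \cref{mtfixlemma}) the two projections commute and $P_{\widetilde{U}}=P_{U_n^{n-1}}P_\Delta$. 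Identity \cref{e:211012f2} is just the algebraic expansion of $R_{\bB}R_{\bA}$ recorded in \cref{e:CF}. For \cref{e:220201a}, the operators $\bA,\bB$ are maximally monotone by \cref{f:C}, and their resolvents have just been shown to equal $P_{\widetilde{U}}=J_{N_{\widetilde{U}}}$ and $P_{\widetilde{V}}=J_{N_{\widetilde{V}}}$; since a maximally monotone operator is uniquely determined by its resolvent, $\bA=N_{\widetilde{U}}$ and $\bB=N_{\widetilde{V}}$. The intersection $\widetilde{U}\cap\widetilde{V}=E(Z)=Z^{n-1}\cap\Delta$ is then read off directly, because a diagonal point $(x,\ldots,x)$ lies in $\widetilde{U}$ exactly when $x\in U_n$ and in $\widetilde{V}$ exactly when $x\in U_1\cap\cdots\cap U_{n-1}$.

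The one genuinely computational step --- and the main obstacle --- is the fixed-point identity $\Fix T=(\widetilde{U}\cap\widetilde{V})\oplus(\widetilde{U}^\perp\cap\widetilde{V}^\perp)$. Here I would use that $T=R_{\widetilde{V}}R_{\widetilde{U}}$ is linear and solve $Tz=z$ by hand: writing $a:=P_{\widetilde{U}}z$ and $a':=P_{\widetilde{U}^\perp}z$, one has $R_{\widetilde{U}}z=a-a'$, and the relation $R_{\widetilde{V}}R_{\widetilde{U}}z=z$ collapses to $P_{\widetilde{V}}(a-a')=a$, which holds precisely when $a\in\widetilde{V}$ and $a'\in\widetilde{V}^\perp$. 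Combined with $a\in\widetilde{U}$ and $a'\in\widetilde{U}^\perp$, this pins down $z=a\oplus a'$ with $a\in\widetilde{U}\cap\widetilde{V}$ and $a'\in\widetilde{U}^\perp\cap\widetilde{V}^\perp$, and the reverse inclusion is checked by the same reflection bookkeeping. The two summands are orthogonal and closed (the second is $(\widetilde{U}+\widetilde{V})^\perp$), so \cref{f:orthoP} gives $P_{\Fix T}=P_{\widetilde{U}\cap\widetilde{V}}\oplus P_{\widetilde{U}^\perp\cap\widetilde{V}^\perp}$.

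Finally, \cref{e:211012h} follows by applying $J_{\bA}=P_{\widetilde{U}}$ to this orthogonal decomposition: $P_{\widetilde{U}}$ fixes the summand $P_{\widetilde{U}\cap\widetilde{V}}\bz\in\widetilde{U}$ and annihilates $P_{\widetilde{U}^\perp\cap\widetilde{V}^\perp}\bz\in\widetilde{U}^\perp$, leaving $J_{\bA}P_{\Fix T}\bz=P_{\widetilde{U}\cap\widetilde{V}}\bz=P_{E(Z)}\bz$. One last application of the commuting-projection argument, now with $Z^{n-1}$ in place of $U_n^{n-1}$, rewrites $P_{E(Z)}\bz=P_{Z^{n-1}}P_\Delta\bz=E(P_Z\bar z)$, completing the proof.
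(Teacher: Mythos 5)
Your proposal is correct, and its skeleton matches the paper's proof: both identify $J_{\bA}=P_{\widetilde{U}}$ via the commuting-projection lemma \cite[Lemma~9.2]{Deutsch}, read off $J_{\bB}=P_{\widetilde{V}}$ coordinatewise, recognize $\tc$ as the (reflected) Douglas-Rachford operator for the pair of subspaces $(\widetilde{U},\widetilde{V})$, and obtain \cref{e:211012h} by applying $P_{\widetilde{U}}$ to the orthogonal decomposition of $P_{\Fix T}$, followed by the same commuting argument for $P_{Z^{n-1}}P_\Delta$. The one place where you genuinely diverge is the identity $\Fix T=(\widetilde{U}\cap\widetilde{V})\oplus(\widetilde{U}^\perp\cap\widetilde{V}^\perp)$ and the resulting formula $P_{\Fix T}=P_{\widetilde{U}\cap\widetilde{V}}\oplus P_{\widetilde{U}^\perp\cap\widetilde{V}^\perp}$: the paper imports this wholesale from \cite[Proposition~3.6]{BBCNPW1}, whereas you prove it directly by solving $R_{\widetilde{V}}R_{\widetilde{U}}z=z$ through the splitting $z=P_{\widetilde{U}}z\oplus P_{\widetilde{U}^\perp}z$; your computation is sound, the key point being that $P_{\widetilde{V}}(a-a')=a$ forces $a\in\ran P_{\widetilde{V}}=\widetilde{V}$ and hence $P_{\widetilde{V}}a'=0$. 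Your route makes the lemma self-contained, independent of the two-subspace Douglas-Rachford theory, at the cost of a short reflection computation; the paper's citation is briefer and emphasizes that the Campoy operator for normal cones of subspaces lands exactly in the well-studied setting of \cite{BBCNPW1}. A second, smaller difference: you justify \cref{e:220201a} explicitly (a maximally monotone operator is determined by its resolvent), a point the paper treats as immediate.
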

\begin{proof}
It is clear from \cref{e:211012b} that $J_{\bA}\bz =  P_{U_n^{n-1}}P_\Delta\bz$. 
Note that $P_{U_n^{n-1}}(\Delta) \subseteq \Delta$. 
It thus follows from \cite[Lemma~9.2]{Deutsch} that 
\begin{equation}\label{e:211115a}
P_\Delta P_{U_n^{n-1}} = P_{U_n^{n-1}}P_\Delta = P_{U_n^{n-1}\cap \Delta} 
= P_{\widetilde{U}}. 
\end{equation}
and \cref{e:211012a} follows. 
The formula for \cref{e:211012c} is clear from \cref{e:211012d}.

It is clear from \cref{e:CT} and \cref{e:CF} that $\fc$ is the Douglas-Rachford operator
for the pair $(\bA,\bB)$, which has the same fixed point set as the Campoy operator $\tc$. 
In view of \cref{e:220201a}, this is the feasibility case applied to the
pair of subspaces $(\widetilde{U},\widetilde{V})$. 
Note that
\begin{equation}
\label{e:211012e}
\widetilde{U}\cap \widetilde{V} = E(Z) = Z^{n-1}\cap \Delta. 
\end{equation}
By \cite[Proposition~3.6]{BBCNPW1}, 
$\Fix T = (\widetilde{U}\cap \widetilde{V})\oplus 
({\widetilde{U}}^\perp \cap {\widetilde{V}}^\perp)$, 
\begin{align}
\label{e:CPFixT}
P_{\Fix T}= P_{\widetilde{U}\cap \widetilde{V}} 
\oplus P_{{\widetilde{U}}^\perp \cap {\widetilde{V}}^\perp},
\end{align}
and 
\begin{align}\label{e:211115b}
J_{\bA}P_{\Fix T} = P_{\widetilde{U}}P_{\Fix T} = 
P_{\widetilde{U}\cap \widetilde{V}}=P_{E(Z)} = P_{Z^{n-1}\cap \Delta} 
= P_{Z^{n-1}}P_\Delta,
\end{align}
where the rightmost identity in \cref{e:211115b}
follows 
from the same argument as in the proof of \cref{e:211115a}. 
\end{proof}

\begin{theorem} {\bf (main result on Campoy splitting)}
Given $\bz_0=(z_{0,1},\ldots,z_{0,n-1}) \in X^{n-1}$ and
$0<\lambda<1$, 
generate the sequence $(\bz_k)_\kkk$ via\footnote{Recall \cref{e:211012f2} 
for the definition of $T$.}
\begin{equation*}
(\forall\kkk)\quad
\bz_{k+1} := (1-\lambda)\bz_k + \lambda T\bz_k.
\end{equation*}
Set 
\begin{equation*}
\bar{z} := \frac{1}{n-1}\big(z_{0,1}+\cdots+z_{0,n-1}\big). 
\end{equation*}
Then 
\begin{equation}
\label{e:211012f}
{\bz}_k \to P_{\Fix T}\bar{\bz}_0 \in \Fix T
\end{equation}
and 
\begin{equation}
\label{e:211012g}
M\bz_k = J_{\bA}\bz_k \to J_{\bA}P_{\Fix T}{\bz}_0 = (P_Z\bar{z},\ldots,P_Z\bar{z}) \in X^n. 
\end{equation}
\end{theorem}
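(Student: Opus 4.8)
The plan is to exploit the \emph{linearity} of the Campoy operator in this subspace setting, which is exactly what upgrades the weak convergence of \cref{f:C} to strong convergence through \cref{c:key}. Writing $T_\lambda := (1-\lambda)\Id + \lambda T$, we have $\bz_k = T_\lambda^k\bz_0$, so the entire theorem reduces to analyzing the iterates of the single operator $T_\lambda$.

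First I would observe that $T$ is \emph{linear}. Indeed, \cref{l:Campoycvg} (specifically \cref{e:211012a}, \cref{e:211012c}, and \cref{e:211012f2}) gives $J_{\bA} = P_{\widetilde{U}}$ and $J_{\bB} = P_{\widetilde{V}}$, which are linear projections onto the closed subspaces $\widetilde{U}$ and $\widetilde{V}$. Hence $R_{\bA} = 2P_{\widetilde{U}} - \Id$ is linear, and therefore $T = \Id - 2P_{\widetilde{U}} + 2P_{\widetilde{V}}R_{\bA}$ is linear as a sum and composition of linear operators; consequently $T_\lambda$ is linear as well.

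Next I would verify that $T_\lambda$ is averaged nonexpansive. Since $T$ is nonexpansive by \cref{f:C} and $0 < \lambda < 1$, the operator $T_\lambda = (1-\lambda)\Id + \lambda T$ is $\lambda$-averaged. Applying \cref{c:key} to this linear, averaged nonexpansive operator then yields
\begin{equation*}
\bz_k = T_\lambda^k\bz_0 \to P_{\Fix T_\lambda}\bz_0.
\end{equation*}
Finally, $\Fix T_\lambda = \Fix T$, since $T_\lambda\bz = \bz \Leftrightarrow \lambda(T\bz - \bz) = 0 \Leftrightarrow T\bz = \bz$ (as $\lambda \neq 0$), which establishes \cref{e:211012f}.

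For the second conclusion I would simply apply the continuous linear operator $M = J_{\bA} = P_{\widetilde{U}}$ to the convergent sequence, obtaining
\begin{equation*}
M\bz_k = J_{\bA}\bz_k \to J_{\bA}P_{\Fix T}\bz_0 = E(P_Z\bar{z}) = (P_Z\bar{z},\ldots,P_Z\bar{z}),
\end{equation*}
where the penultimate equality is exactly \cref{e:211012h}. This gives \cref{e:211012g}. The only genuinely delicate point is the recognition that, for subspaces, both resolvents collapse to linear projections, so that \cref{c:key} becomes available; everything else is bookkeeping, and the identification of the strong limit is handed to us directly by \cref{l:Campoycvg}.
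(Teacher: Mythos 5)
Your proof is correct and follows essentially the same route as the paper's: the paper likewise invokes the nonexpansiveness of $T$ from \cref{f:C} together with \cref{c:key} (applied to the linear, $\lambda$-averaged operator $T_\lambda$) to get \cref{e:211012f}, and then deduces \cref{e:211012g} from \cref{e:211012h}. You have merely spelled out the steps the paper leaves implicit — the linearity of $T$ via $J_{\bA}=P_{\widetilde{U}}$, $J_{\bB}=P_{\widetilde{V}}$, and the identity $\Fix T_\lambda = \Fix T$ — which is fine.
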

\begin{proof}
Because $T$ is nonexpansive (see \cref{f:C}), we see that 
\cref{e:211012f} follows from \cref{c:key}. 
Finally, \cref{e:211012g} follows from \cref{e:211012h}. 
\end{proof}

For future use in \cref{sec:matrix}, we note that 
the operators defined in \cref{e:CampoyM} and \cref{e:CampoyS} 
 turn into
\begin{subequations}
\begin{align}
\label{e:CMlin}M
&: 
X^{n-1}\to X^{n-1} : 
\bz = 
\begin{pmatrix}
z_{1}\\
\vdots\\
z_{n-1}
\end{pmatrix}
\mapsto
\frac{1}{n-1}\begin{pmatrix}\sum_{i=1}^{n-1}P_{n}(z_{i})\\
\vdots\\
\sum_{i=1}^{n-1}P_{n}(z_{i})\end{pmatrix}
\quad\text{and}\;\;\\[+2mm]
\label{e:CSlin}
S&:X^{n-1}\to X^{n-1}:
\bz = \begin{pmatrix}
z_{1}\\
\vdots\\
z_{n-1}
\end{pmatrix}
\mapsto
\begin{pmatrix}
x_{1}\\
\vdots\\
x_{n-1}
\end{pmatrix},
\quad\text{where}\;\;
\\[+2mm]
&(\forall i\in\{1,\ldots,n\})
\;\;
x_{i} = P_{i}(2Q_iM\bz- z_{i}), 
\end{align}
\end{subequations}
while the \emph{Campoy operator} remains (see \cref{e:CampoyT}) 
 \begin{equation}
 \label{e:CTlin}
 T = \tc \colon X^{n-1}\to X^{n-1}\colon 
 \bz\mapsto 
 \bz+ 2S\bz - 2M\bz. 
 \end{equation}

\subsection{Extension to the consistent affine case}

In this subsection, we comment on the behaviour of the above
splitting algorithms  in the consistent affine case.
To this end, we shall assume that 
$V_1,\ldots,V_n$ are closed affine subspaces of $X$ with
nonempty intersection:
\begin{equation*}
V := V_1\cap V_2\cap \cdots \cap V_n \neq \varnothing.
\end{equation*}
We repose the problem of finding a point in $Z$ as 
\begin{equation*}
\text{find $x\in X$ such that $0\in (A_1+A_2+\cdots+A_n)x$,}
\end{equation*}
where each $A_i = N_{V_i}$.
When we consider Ryu splitting, we also impose $n=3$.
Set $U_i:=V_i-V_i$, which is the \emph{parallel space} of $V_i$.
Now let $v\in V$.
Then $V_i = v+U_{i}$ and hence 
$J_{N_{V_i}}=P_{V_i} = P_{v+U_i}$ satisfies
$P_{v+U_i} = v+P_{U_i}(x-v)=P_{U_i}x+P_{U_i^\perp}(v)$.
Put differently, the resolvents from the affine problem
are translations of the the resolvents from the corresponding linear problem
which considers $U_i$ instead of $V_i$.

The construction of the operator $T\in \{\tryu,\tmt,\tc\}$ now shows 
that it is a translation of the corresponding operator from the linear problem.
And finally $T_\lambda = (1-\lambda)\Id+\lambda T$ is a translation of 
the corresponding operator from the linear problem which we denote by $L_\lambda$:
$L_\lambda = (1-\lambda)\Id+\lambda L$, where $L$ is 
the Ryu operator, the Malitsky-Tam operator, or the Campoy operator 
of the parallel linear problem,
and  there exists $b\in X^{n-1}$ such that 
\begin{equation*}
T_\lambda(x) = L_\lambda(x)+b. 
\end{equation*}
By \cref{f:BLM} (applied in $X^{n-1}$), 
there exists a vector $a\in X^{n-1}$ such that 
\begin{equation}
\label{e:para1}
(\forall\kkk)\quad T_\lambda^kx = a + L_\lambda^k(x-a).
\end{equation}
In other words, the behaviour in the affine case is essentially
the same as in the linear parallel case, appropriately shifted by the vector $a$.
Moreover, because $L_\lambda^k\to P_{\Fix L}$ in the parallel linear setting,
we deduce from \cref{f:BLM} that 
\begin{equation*}
T_\lambda^k \to P_{\Fix T}
\end{equation*}
By \cref{e:para1}, the rate of convergence in the affine case are identical
to the rate of convergence in the parallel linear case.

Each of our three algorithms under consideration features an operator $M$ --- 
see \cref{e:linM}, \cref{e:linMTM}, \cref{e:CMlin} --- 
for Ryu splitting, for Malitksy-Tam splitting, for Campoy splitting,
respectively. In all cases, the convergence results established guarantee that 
\begin{equation*}
\overline{M}T^k_\lambda\bz_0 \to P_V\overline{z}, 
\end{equation*}
where $\overline{M}$ returns the arithmetic average of the output of $M$, 
where $\bz_0$ is the starting point, and  
where $\overline{z}$ is either the first component of $\bz_0$(for Ryu splitting) 
or  the arithmetic average of the components of $\bz_0$ (for Malitsky-Tam and for 
Campoy splitting); 
see
\cref{e:210815e},
\cref{e:210817i}, and
\cref{e:211012g}.

To sum up this subsection, we note that 
\emph{in the consistent affine case, Ryu's, the Malitsky-Tam, and Campoy's algorithm
each exhibits the same pleasant convergence behaviour as their linear parallel counterparts!}

It is, however, less clear how these algorithms  behave  when $V=\varnothing$. 

\section{Matrix representation}

\label{sec:matrix}

In this section, we assume that $X$ is finite-dimensional,
say 
\begin{equation*}
X=\RR^d.
\end{equation*}
All three splitting algorithms considered in this paper are of the form 
\begin{equation}
\label{e:210818a}
T_\lambda^k\to P_{\Fix T}, 
\quad\text{where $0<\lambda<1$ and $T_\lambda = (1-\lambda)\Id+\lambda T$.}
\end{equation}
{\color{black} In anticipation of \cref{sec:numexp}, 
we will also consider a fourth operator based on ``POCS'', the (sequential) method of Projections Onto Convex Sets.}
Starting from \cref{sec:main}, we have dealt with a special case where 
$T$ is a linear operator; hence, so is $T_\lambda$ and 
by \cite[Corollary~2.8]{BLM}, the convergence of the iterates 
is \emph{linear} because $X$ is finite-dimensional. 
What can be said about this rate?
By \cite[Theorem~2.12(ii) and Theorem~2.18]{BBCNPW2}, a (sharp) 
\emph{lower bound} for the rate of linear 
convergence is the \emph{spectral radius} of $T_\lambda - P_{\Fix T}$, i.e., 
\begin{equation*}
\rho\big(T_\lambda- P_{\Fix T}\big) :=
\max \big|\{\text{(possibly complex) eigenvalues of $T_\lambda- P_{\Fix T}$}\}\big|,
\end{equation*}
while an \emph{upper bound} is the operator norm
\begin{equation*}
\big\|T_\lambda- P_{\Fix T}\big\|. 
\end{equation*}
The lower bound is optimal and close to the true rate of convergence, 
see \cite[Theorem~2.12(i)]{BBCNPW2}.
Both spectral radius and operator norms of matrices are available
in programming languages such as \texttt{Julia} \cite{Julia} which features 
strong numerical linear algebra capabilities.
In order to compute these bounds for the linear rates, 
we must provide \emph{matrix representations} 
for $T$ (which immediately gives rise to one for $T_\lambda$) 
and for $P_{\Fix T}$.
In the previous sections, we casually switched back and forth
being column and row vector representations for readability.
In this section, we need to get the structure of the objects right.
To visually stress this, we will use \emph{square brackets} for vectors and 
matrices.

For the remainder of this section, we fix 
three linear subspaces $U,V,W$ of $\RR^d$, with intersection 
\begin{equation*}
Z := U\cap V\cap W. 
\end{equation*}
We assume that the matrices $P_U,P_V,P_W$ in $\RR^{d\times d}$ are available to us
(and hence so are $P_{U^\perp},P_{V^\perp},P_{W^\perp}$ and $P_Z$,
via \cref{ex:perp} and \cref{c:AD3}, respectively).

\subsection{Ryu splitting}

\label{ss:Ryublock}

In this subsection, we consider Ryu splitting.
First, 
the block matrix representation of the operator $M$ occurring 
in Ryu splitting (see \cref{e:linM}) is 
\begin{equation}
M = \begin{bmatrix}
P_U \;&  0 \\[0.5em]
P_VP_U \;& P_V\\[0.5em]
P_WP_U+P_WP_VP_U-P_W\;\; & P_WP_V-P_W
\end{bmatrix}\in\RR^{3d\times 2d}.
\label{e:RyuMmat}
\end{equation}
Hence, using \cref{e:linT}, 
we obtain the following matrix representation of the
Ryu splitting operator $T=\tryu$:
\begin{subequations}
\label{e:220525a}
\begin{align}
T &=\textcolor{black}{\begin{bmatrix}\Id\;&0\\[0.5em]0\;&\Id\end{bmatrix}+}
\begin{bmatrix}
-\Id & 0 &\Id \\[0.5em]
0 & -\Id & \Id
\end{bmatrix}
\begin{bmatrix}
P_U \;&  0 \\[0.5em]
P_VP_U \;& P_V\\[0.5em]
P_WP_U+P_WP_VP_U-P_W\;\; & P_WP_V-P_W
\end{bmatrix}\\[1em]
&= 
\begin{bmatrix}
\textcolor{black}{\Id}-P_U+P_WP_U+P_WP_VP_U-P_W &\;\; P_WP_V-P_W\\[0.5em]
P_WP_U+P_WP_VP_U-P_W-P_VP_U & \;\; \textcolor{black}{\Id+}P_WP_V-P_V-P_W
\end{bmatrix}\in\RR^{2d\times 2d}. 
\end{align}
\end{subequations}
Next, we set, as in \cref{firelemma},
\begin{subequations}
\begin{align}
\Delta &= \menge{[x,x]^\intercal\in \RR^{2d}}{x\in X},\\
E&=\big(U^\perp\times V^\perp) \cap \big(\Delta^\perp+(\{0\}\times W^\perp) \big)\label{e:RyuE}
\end{align}
\end{subequations}
so that, by \cref{e:210815c},
\begin{equation}
\label{e:RyuE4}
P_{\Fix T}\begin{bmatrix}x\\y\end{bmatrix} = 
\begin{bmatrix}P_{Z}x\\0\end{bmatrix}+ P_E\begin{bmatrix}x\\y\end{bmatrix}.
\end{equation}
With the help of \cref{c:AD3}, we see that the first term, $[P_{Z}x,0]^\intercal$, 
is obtained by applying the matrix 
\begin{equation}
\label{e:RyuE1}
\begin{bmatrix}
P_Z & 0 \\
0 & 0 
\end{bmatrix}
= 
\begin{bmatrix}
4P_U(P_U+P_V)^\dagger P_V\big(2P_U(P_U+P_V)^\dagger P_V+P_W\big)^\dagger P_W & 0 \\
0 & 0
\end{bmatrix}
\in\RR^{2d\times 2d}
\end{equation}
to $[x,y]^\intercal$. 
Let's turn to $E$, which is an intersection of two linear subspaces.
The projector of the left linear subspace making up this intersection, $U^\perp\times V^\perp$, 
has the matrix representation
\begin{equation}
\label{e:RyuE3}
P_{U^\perp \times V^\perp} = 
\begin{bmatrix}
\Id-P_U & 0 \\
0 & \Id-P_V
\end{bmatrix}. 
\end{equation}
We now turn to the right linear subspace, 
$\Delta^\perp+(\{0\}\times W^\perp)$, which is a sum
of two subspaces whose complements are
$\Delta^{\perp\perp}=\Delta$ and
$((\{0\}\times W^\perp)^\perp = X\times W$, respectively.
The projectors of the last two subspaces are
\begin{equation*}
P_\Delta =
\frac{1}{2}
\begin{bmatrix}
\Id & \Id \\
\Id & \Id
\end{bmatrix}
\;\;\text{and}\;\;
P_{X\times W} =
\begin{bmatrix}
\Id & 0 \\
0 & P_W
\end{bmatrix},
\end{equation*}
respectively.
Thus, \cref{c:ADsum} yields
\begin{subequations}
\label{e:RyuE2}
\begin{align}
&P_{\Delta^\perp+(\{0\}\times W^\perp)}\\
&= 
\begin{bmatrix}
\Id & 0 \\
0 & \Id
\end{bmatrix}
- 2\cdot \frac{1}{2}
\begin{bmatrix}
\Id & \Id \\
\Id & \Id
\end{bmatrix}
\left(
\frac{1}{2}
\begin{bmatrix}
\Id & \Id \\
\Id & \Id
\end{bmatrix}
+
\begin{bmatrix}
\Id & 0 \\
0 & P_W
\end{bmatrix}
\right)^\dagger
\begin{bmatrix}
\Id & 0 \\
0 & P_W
\end{bmatrix}\\
&= 
\begin{bmatrix}
\Id & 0 \\
0 & \Id
\end{bmatrix}
- 2
\begin{bmatrix}
\Id & \Id \\
\Id & \Id
\end{bmatrix}
\begin{bmatrix}
{3}\Id & \Id \\
\Id & \Id+2P_W
\end{bmatrix}^\dagger
\begin{bmatrix}
\Id & 0 \\
0 & P_W
\end{bmatrix}. 
\end{align}
\end{subequations}
To compute $P_E$, where $E$ is as in \cref{e:RyuE}, 
we combine \cref{e:RyuE3}, \cref{e:RyuE2} under the umbrella
of \cref{f:AD} --- the result does not seem to simplify so 
we don't typeset it. 
Having $P_E$, we simply add it to \cref{e:RyuE1} to obtain 
$P_{\Fix T}$ because of \cref{e:RyuE4}.

\subsection{Malitsky-Tam splitting}

\label{ss:MTblock}

In this subsection, we turn to Malitsky-Tam splitting for the current setup
--- this corresponds to \cref{ss:MTlin} with $n=3$ and where
we identify $(U_1,U_2,U_3)$ with $(U,V,W)$.
The block matrix representation of $M$ from \cref{e:linMTM} is 
\begin{equation}
\begin{bmatrix}
P_U \;&  0 \\[0.5em]
-P_V(\Id-P_U) \;& P_V\\[0.5em]
P_W(P_U+P_VP_U-P_V)\;\; & -P_W(\Id-P_V)
\end{bmatrix}
\in\RR^{3d\times 2d}.
\label{e:MTMmat}
\end{equation}
Thus, using \cref{e:linMT}, 
we obtain the following matrix representation 
of the Malitsky-Tam splitting operator $T=\tmt$: 
\begin{subequations}
\label{e:220526a}
\begin{align}
T &=
\textcolor{black}{\begin{bmatrix}\Id\;&0\\[0.5em]0\;&\Id\end{bmatrix}+}
\begin{bmatrix}
-\Id & \Id & 0 \\[0.5em]
0 & -\Id & \Id
\end{bmatrix}
\begin{bmatrix}
P_U \;&  0 \\[0.5em]
-P_V(\Id-P_U) \;& P_V\\[0.5em]
P_W(P_U+P_VP_U-P_V)\;\; & -P_W(\Id-P_V)
\end{bmatrix}\\[1em]
&= 
\begin{bmatrix}
\textcolor{black}{\Id}-P_U-P_V(\Id-P_U) &\;\; P_V\\[0.5em]
P_V(\Id-P_U)+P_W(P_U+P_VP_U-P_V) & \;\; \textcolor{black}{\Id}-P_V-P_W(\Id-P_V)
\end{bmatrix}\\[1em]
&= 
\begin{bmatrix}
(\Id-P_V)(\Id-P_U) &\;\; P_V\\[0.5em]
(\Id-P_W)P_V(\Id-P_U)+P_WP_U & \;\; (\Id-P_W)(\Id-P_U)
\end{bmatrix}\in\RR^{2d\times 2d}. 
\end{align}
\end{subequations}
Next, in view of \cref{e:210817d}, 
we have 
\begin{equation}
\label{e:zahn4}
P_{\Fix T}
= \frac{1}{2}
\begin{bmatrix}
P_Z & P_Z \\
P_Z & P_Z
\end{bmatrix}
+ P_E,
\end{equation}
where (see \cref{e:bloodyE} and \cref{e:bloodyPsi})
\begin{equation}
\label{e:zahn0}
E = \ran\Psi \cap (X\times W^\perp)
\end{equation}
and 
\begin{equation*}
\Psi \colon U^\perp \times V^\perp \to X^2
\colon \begin{bmatrix} y_1\\y_2\end{bmatrix}
\mapsto \begin{bmatrix} y_1\\y_1+y_2\end{bmatrix}. 
\end{equation*}
We first note that 
\begin{equation*}
\ran \Psi = \ran 
\begin{bmatrix}
\Id & 0 \\
\Id & \Id
\end{bmatrix}
\begin{bmatrix}
P_{U^\perp} & 0 \\
0 & P_{V^\perp}
\end{bmatrix}
=\ran
\begin{bmatrix}
P_{U^\perp} & 0 \\
P_{U^\perp} & P_{V^\perp}
\end{bmatrix}. 
\end{equation*}
We thus obtain from \cref{f:Pran} that 
\begin{equation}
\label{e:zahn1}
P_{\ran\Psi} = \begin{bmatrix}
P_{U^\perp} & 0 \\
P_{U^\perp} & P_{V^\perp}
\end{bmatrix}
\begin{bmatrix}
P_{U^\perp} & 0 \\
P_{U^\perp} & P_{V^\perp}
\end{bmatrix}^\dagger.
\end{equation}
On the other hand,
\begin{equation}
\label{e:zahn2}
P_{X\times W^\perp}
= \begin{bmatrix}
\Id & 0 \\
0 & P_{W^\perp}
\end{bmatrix}. 
\end{equation}
In view of \cref{e:zahn0}
and \cref{f:AD}, we obtain
\begin{equation}
\label{e:zahn3}
P_E = 2P_{\ran\Psi}\big( P_{\ran\Psi}+P_{X\times W^\perp}\big)^\dagger
P_{X\times W^\perp}.
\end{equation}
We could 
now use our formulas \cref{e:zahn1} and \cref{e:zahn2}
for $P_{\ran\Psi}$ and $P_{X\times W^\perp}$
to obtain a more explicit formula for $P_E$ --- 
but we refrain from doing so as the expressions become unwieldy.
Finally, plugging the formula for $P_Z$ from \cref{c:AD3} 
into \cref{e:zahn4} as well as plugging \cref{e:zahn3} into \cref{e:zahn4} 
yields a formula for $P_{\Fix T}$.

\subsection{Campoy splitting}

In this subsection, we look at the Campoy splitting for the current setup
--- this corresponds to \cref{ss:MTlin} with $n=3$ and where
we identify $(U_1,U_2,U_3)$ with $(U,V,W)$.

Using the linearity of $P_W$, we see that 
the block matrix representation of $M$ from \cref{e:CMlin} is 
\begin{equation}\label{e:CMmat}
M=\frac12\begin{bmatrix}
P_W&P_W\\P_W&P_W
\end{bmatrix}. 
\end{equation}
We then write $S$ from \cref{e:CSlin} as
\begin{align*}
S&=\begin{bmatrix}
P_U&0\\
0&P_V
\end{bmatrix}\parens*{2M
-\begin{bmatrix}
\Id&0\\0&\Id
\end{bmatrix}}\\
&=\begin{bmatrix}
P_U&0\\
0&P_V
\end{bmatrix}{\begin{bmatrix}
P_W-\Id&P_W\\P_W&P_W-\Id
\end{bmatrix}}\\
&=\begin{bmatrix}
P_UP_W-P_U&P_UP_W\\P_VP_W&P_VP_W-P_V
\end{bmatrix}. 
\end{align*}
Therefore, using \cref{e:CTlin}, we see that 
the Campoy operator $T=T_C$ is expressed as 
\begin{subequations}
\label{e:220526c}
\begin{align}
T 
&=\begin{bmatrix}
\Id&0\\0&\Id\end{bmatrix}
+2S 
-2M\\
&= \begin{bmatrix}
\Id&0\\0&\Id\end{bmatrix}
+ 2\begin{bmatrix}
P_UP_W-P_U&P_UP_W\\P_VP_W&P_VP_W-P_V
\end{bmatrix}
- \begin{bmatrix}P_W & P_W\\P_W & P_W\end{bmatrix}
\\
&=\begin{bmatrix}
\Id+2P_UP_W-2P_U - P_W& 2P_UP_W-P_W\\
2P_VP_W- P_W&\Id+2P_VP_W-2P_V-P_W
\end{bmatrix}. 
\end{align}
\end{subequations}
We set, as in \cref{l:Campoycvg}, 
\begin{equation*}
\widetilde{U} := W^2 \cap \Delta
\;\;\text{and}\;\;
\widetilde{V} := U\times V,
\end{equation*}
where $\Delta$ is the diagonal in $X^2$, and we obtain from \cref{e:211012ac}
\begin{align}
\label{e:211116a}
P_{\widetilde{U}}=
P_{W^2}P_\Delta = 
\begin{bmatrix}
P_W & 0 \\ 0 & P_W
\end{bmatrix}
\frac{1}{2}
\begin{bmatrix}
\Id & \Id \\ \Id & \Id 
\end{bmatrix}
=\frac{1}{2}
\begin{bmatrix}
P_W & P_W \\ P_W & P_W
\end{bmatrix}
\quad\text{and}\quad P_{\widetilde{V}}=\begin{bmatrix}
P_U & 0 \\ 0 & P_V
\end{bmatrix}.
\end{align}
We now use these projection formulas along with 
\cref{l:Campoycvg}, \cref{f:AD}, and \cref{ex:perp} to obtain 
\begin{subequations}
\label{e:211116b}
\begin{align}
P_{\Fix T}&=P_{\widetilde{U}\cap \widetilde{V}}+
P_{\widetilde{U}^\perp\cap \widetilde{V}^\perp}\\
&=2P_{\widetilde{U}}(P_{\widetilde{U}}+P_{\widetilde{V}})^\dagger P_{\widetilde{V}}+
2(\Id-P_{\widetilde{U}})(2\Id-P_{\widetilde{U}} -P_{\widetilde{V}})^\dagger
(\Id-P_{\widetilde{V}}). 
\end{align}
\end{subequations}
If desired, one may express $P_{\Fix T}$ in terms of $P_U,P_V,P_W$ 
by substituting \cref{e:211116a} into \cref{e:211116b}; 
however, due to limited space, we refrain from listing the outcome. 

{\color{black}
\subsection{POCS}

When applied to linear subspaces, it is known that the method of projections onto convex sets (POCS) converges to the projection onto the intersection. Hence POCS is a natural (sequential) algorithm to compare
the above splitting methods to. 
The standard POCS operator is $P_WP_VP_U$.
We now derive a corresponding operator $T$ based on the notion
of averaged operators 
(see, e.g., \cite[Section~4.5]{BC2017} for more on this notion).
Observe first that each projector is $\thalb$-averaged (\cite[Remark~4.34(iii)]{BC2017}). Next, using \cite[Proposition~4.46]{BC2017}, the composition
$P_WP_VP_U$ is $\tfrac{3}{4}$-averaged. We therefore set 
\begin{equation}
\label{e:RPOCS}
T := \tfrac{4}{3}P_WP_VP_U-\tfrac{1}{3}\Id.
\end{equation}
Then 
\begin{equation}
\Fix T = U\cap V\cap W = Z
\end{equation}
so $P_{\Fix T }= P_Z$ and $P_WP_VP_U = T_{3/4}$. 
Because $T$ is nonexpansive, \cref{e:210818a} yields that 
$T^k_\lambda \to P_{Z}$ when $0<\lambda<1$. 
Note that POCS operates in the original space $X$ and not in a product space like the other operators. 

\begin{remark} {\bf (sequential vs parallel)}
Consider the much simpler case of comparing alternating projections (unrelaxed POCS for two subspaces) to parallel projections. 
Combining works by Kayalar and Weinert \cite[Theorem~2]{KW} 
(see also \cite[Theorem~9.31]{Deutsch}) and 
by Badea, Grivaux, and M\"uller \cite[Proposition~3.7]{BGM} 
yields 
\begin{equation}
\|\thalb(P_U+P_V)-P_{U\cap V}\|= \thalb\|P_VP_U-P_{U\cap V}\|+\thalb,
\end{equation}
which shows that alternating projections perform better than parallel projections. Unfortunately, the situation is less clear 
for $3$ or more subspaces -- we refer the reader to \cite[Theorem~4.4.(i)]{BGM} where 
an estimate for the unrelaxed POCS operator is presented. 
However, the case of two subspaces indicates that it would not come as a surprise if a sequential methods outperforms a parallel one. 
\end{remark}

}

\section{Numerical experiments}

\label{sec:numexp}

We now describe several experiments to evaluate the performance 
of the algorithms described in \cref{sec:matrix}. 
Each instance of an experiment involves three subspaces $U_i$ of dimension $d_i$ for $i\in\{1,2,3\}$ in $X=\RR^d$. 
By \cite[equation~(4.419) on page~205]{Meyer}, 
\begin{equation*}
\dim(U_1+ U_2)=d_1+d_2-\dim(U_1\cap U_2).
\end{equation*}
Hence
\begin{equation*}
\dim(U_1\cap U_2)=d_1+d_2-\dim(U_1+U_2)\geq d_1+d_2-d.
\end{equation*}
Thus $\dim(U_1\cap U_2)\geq 1$ whenever 
\begin{equation}\label{e:U1U2dim}
d_1+d_2\geq d+1.
\end{equation}
Similarly, 
\begin{equation*}
\dim(Z)\geq \dim(U_1\cap U_2)+d_3-d\geq d_1+d_2-d+d_3-d=d_1+d_2+d_3-2d.
\end{equation*}
Along with \cref{e:U1U2dim}, a sensible choice for $d_i$ satisfies 
\begin{equation*}
d_i\geq 1+\lceil2d/3\rceil
\end{equation*}
because then $d_1+d_2\geq 2+2\lceil 2d/3\rceil\geq 2+4d/3>2+d$. 
Hence $d_1+d_2\geq 3+d$ and $d_1+d_2+d_3> 3+3\lceil 2d/3\rceil\geq 3+2d$. 
The smallest $d$ that gives proper subspaces 
is $d=6$, for which $d_1=d_2=d_3=5$ satisfy the above conditions.

We now describe our set of three numerical experiments designed to 
observe different aspects of the algorithms. 

\subsection{Experiment 1: Bounds on the rates of linear convergence}

As shown in \cref{sec:matrix}, we have lower and upper bounds on the rate of 
linear convergence of the operator $T_\lambda$. 
We conduct this experiment to observe how these bounds change as we vary 
$\lambda$.  To this end, we 
generate 1000 instances of triples of linear subspaces $(U_1,U_2,U_3)$. 
This was done by randomly generating triples of three matrices 
$(B_1,B_2,B_3)$ each drawn from $\RR^{6\times 5}$. 
These were used to define the range spaces of these subspaces, 
which in turn gave us the projection onto $U_i$ 
(using, e.g., \cite[Proposition~3.30(ii)]{BC2017}) as 
\begin{equation*}
P_{U_i}=B_iB_i^\dagger.
\end{equation*}
For each instance, algorithm, and $\lambda\in 
\tmenge{0.01\cdot k}{k\in\{1,2,\ldots,110\}}$, 
we obtain the operators $T_\lambda$ and $P_{\Fix T}$ as outlined in \cref{sec:matrix} 
and compute the spectral radius and operator norm of 
$T_\lambda-P_{\Fix T}$. 
Note that the convergence of the algorithms is only guaranteed for $\lambda\in 
\tmenge{0.01\cdot k}{k\in\{1,2,\ldots,99\}}$, 
but we have plotted beyond this range 
to observe the behaviour of the algorithms. 
\cref{fig:exp1} reports the median of the spectral radii and operator norms for each $\lambda$. 

\begin{figure}[ht!]
\centering
\includegraphics[width=\textwidth]{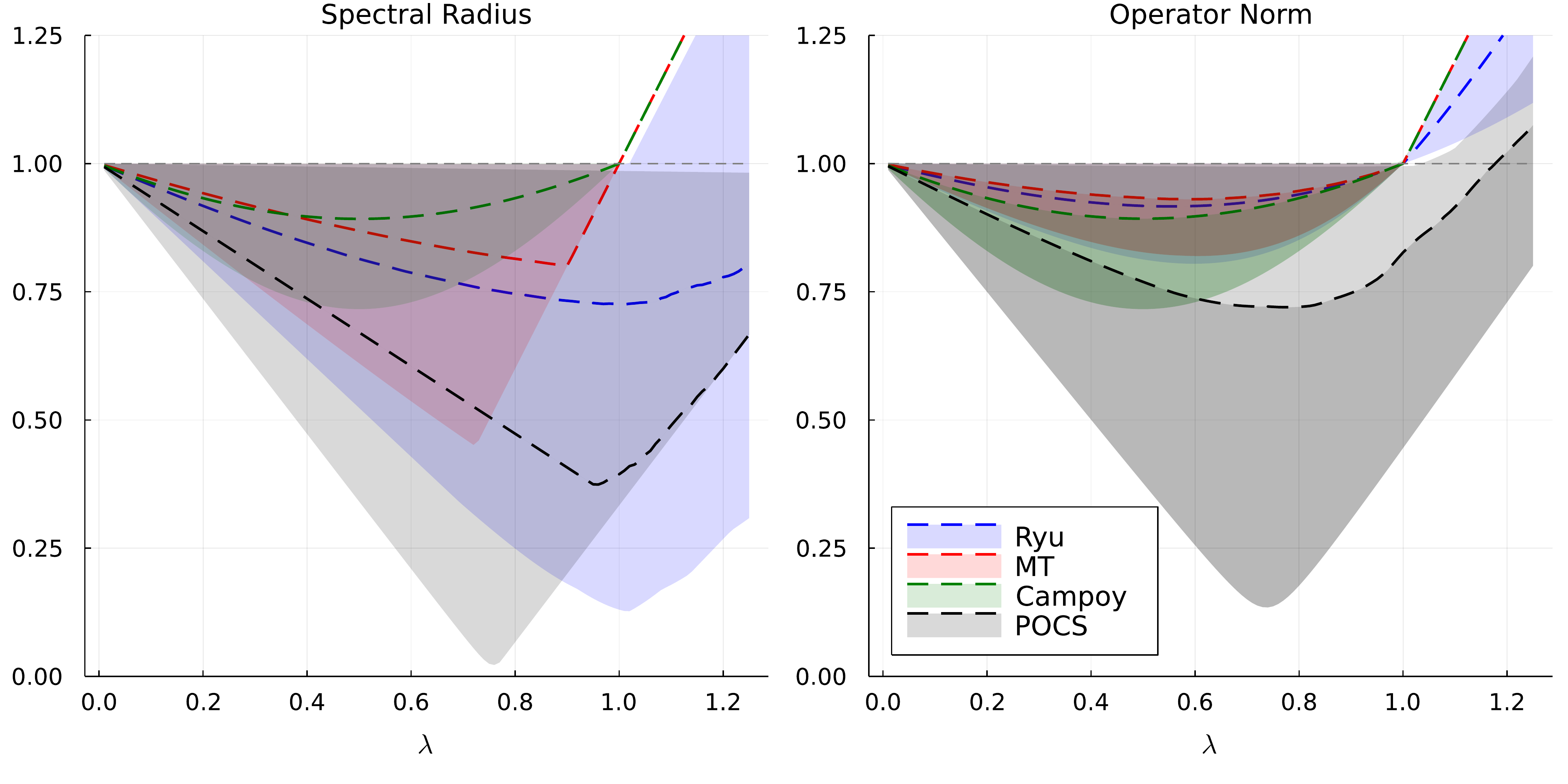}
\caption{Experiment 1: The median (solid line) and range (shaded region) of the spectral radii and operator norms 
}
\label{fig:exp1}
\end{figure}

For the spectral radius, while Ryu  shows a steady decline in the median value for $\lambda< 1$, 
Malitsky-Tam, Campoy, and POCS start increasing well before $\lambda=1$. 
The maximum value, which can be seen by the range of these values for each algorithm,
is some value less than, but close to $1$ for all $\lambda<1$.
The operator norm plot, which provides the upper bound for the convergence rates,
also stays below 1 for $\lambda<1$.
For MT, the sharp edge in the spectral norm appears as the spectral radius 
involves the maximum of different spectral values. 
By visual inspection, we see that for Campoy the lower and upper bounds coincide 
--- we weren't aware of this beforehand and are now able to provide a rigorous proof in 
\cref{r:radial} below. 

\begin{remark}{\bf (relaxed Douglas-Rachford operator)}
\label{r:radial}
Suppose that $T=\tc$ is the Campoy operator which implies 
that $T_\lambda = (1-\lambda)\Id+\lambda T$ is 
the relaxed Douglas-Rachford operator for finding a zero of 
the sum of two normal cone operators of two linear subspaces. 
By \cite[Theorem~3.10(i)]{BBCNPW2}, the matrix $T_\lambda$ is normal, i.e., 
$T_\lambda T_\lambda^* = T_\lambda^* T_\lambda$.
Moreover, \cite[Proposition~3.6(i)]{BBCNPW1} yields 
$\Fix T = \Fix T_\lambda = \Fix T_\lambda^* = \Fix T^*$. 
Set $P := P_{\Fix T} = P_{\Fix T^*}$ for brevity.
Then $P=P^*$ and $T_\lambda P = P = T_\lambda^*P$.
Taking the transpose yields $PT_\lambda^* = P = PT_\lambda$. 
Thus
\begin{align*}
(T_\lambda-P)(T_\lambda-P)^* 
&= 
T_\lambda T_\lambda^* - T_\lambda P - PT_\lambda^*+P\\
&= 
T_\lambda T_\lambda^* - P\\
&= 
T_\lambda^* T_\lambda - P\\
&=
T_\lambda^* T_\lambda - T_\lambda^*P - PT_\lambda + P\\
&=
(T_\lambda-P)^*(T_\lambda-P), 
\end{align*}
i.e., $T_\lambda-P$ is normal as well. 
Now \cite[page~431f]{GoldZwas} implies that the matrix
$T_\lambda-P$ is \emph{radial}, i.e., its spectral radius coincides 
with its operator norm. 
This explains that the two (green) curves for the Campoy algorithm in 
\cref{fig:exp1} are identical. 
\end{remark}

\subsection{Experiment 2: Number of iterations to achieve prescribed accuracy}

Because we know 
the limit points of the governing as well as shadow sequences, 
we investigate how varying $\lambda$ affects the number of iterations 
required to approximate 
the limit to a given accuracy.
For Experiment~2, we fix 100 instances of triples of subspaces $(U_1,U_2,U_3)$. 
We also fix 100 different starting points in $\RR^6$.
For each instance of the subspaces, starting point $\bz_0$ and 
$\lambda\in\tmenge{0.01\cdot k}{k\in\{1,2,\ldots,199\}}$, 
we obtain the number of iterations (up to a maximum of $10^4$ iterations) required to
achieve $\varepsilon=10^{-6}$ accuracy.

For the governing sequence, the limit $P_{\fix T}(\bz_0)$ is used to determine the stopping condition. 
\cref{fig:exp2a} reports the median number of iterations required for each $\lambda$ to achieve the given accuracy. 
\begin{figure}[H]
\centering
\includegraphics[width=\textwidth]{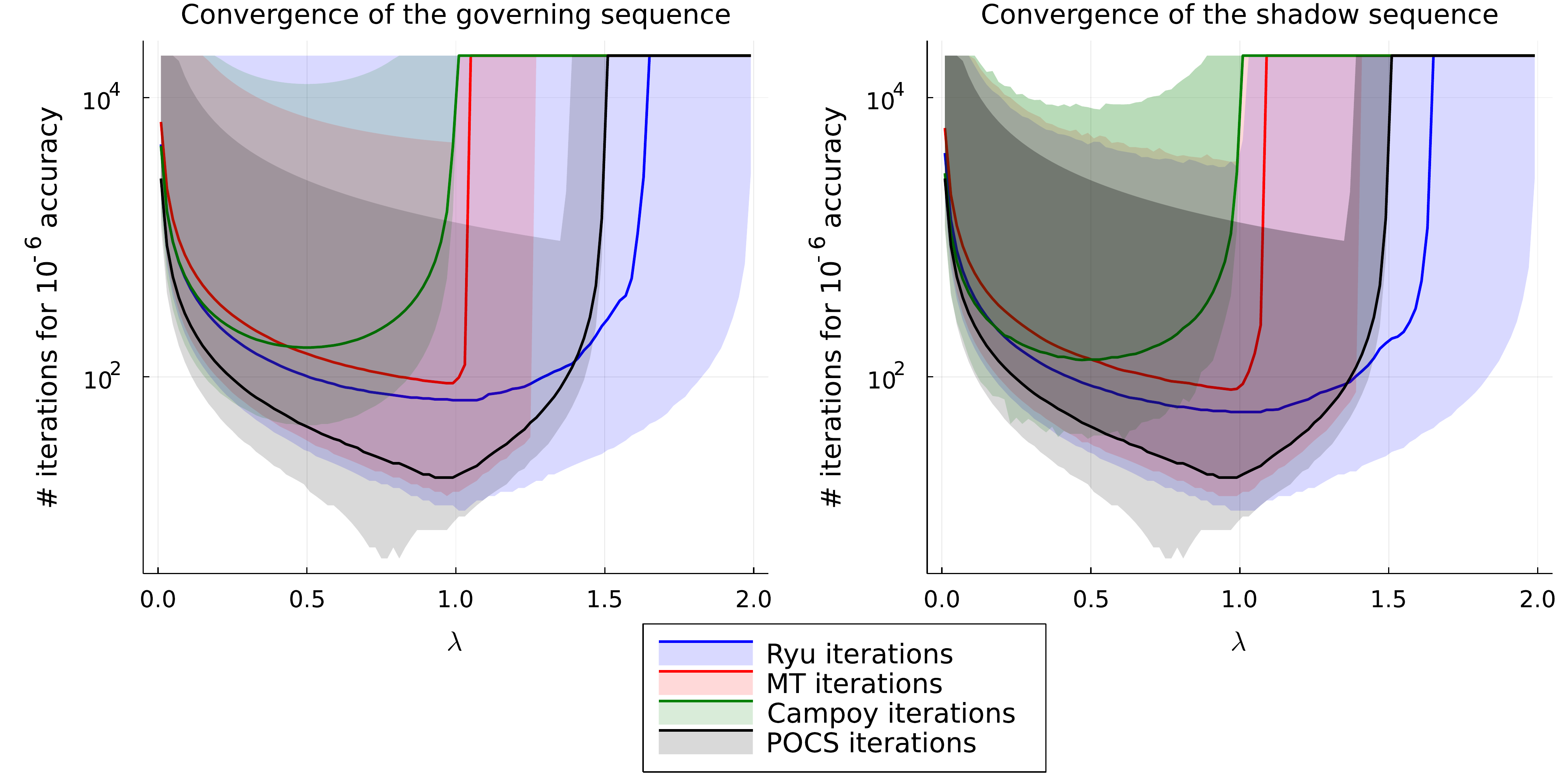}
\caption{Experiment 2: The median (solid line) and range (shaded region) of the number of iterations for the governing  and shadow sequences}
\label{fig:exp2a}
\label{fig:exp2}
\end{figure}

For the shadow sequence, we compute the median number of iterations required to achieve $\varepsilon=10^{-6}$ accuracy for the sequence $(\overline{M}\bz_k)_\kkk$ with respect to its limit $P_Zz_0$, where $z_0$ is the average of the components of $\bz_0$. 
See \cref{fig:exp2} for results. 
{\color{black} For POCS, we pick the governing sequence equal to the shadow sequence.}

For Ryu and MT in both experiments, 
increasing values of $\lambda$ result in a decreasing number of median iterations required for $\lambda<1$.
For Campoy, the median iterations reach the minimum at $\lambda\approx 0.5$ 
and keep increasing after that.
As is evident from the maximum number of iterations required for a fixed $\lambda$, 
the shadow sequence converges before the governing sequence for larger values of $\lambda$ in $\left]0,1\right[$.
One can also see that Ryu consistently requires fewer median iterations 
for both the governing and the shadow sequence to achieve the same accuracy as MT for a fixed lambda. However, Campoy performs better than Ryu for small values of $\lambda$, and beats MT for a larger range of $\lambda\in \left]0,0.5\right[$. 
{\color{black} POCS naturally performed better for all values of $\lambda$. Its behaviour for values of $\lambda$ beyond 1 is similar to Ryu.}

\subsection{Experiment 3: Convergence plots of shadow sequences}

In this experiment, we measure the distance of the terms of the governing (and shadow) sequence from its limit point,
to observe how the iterates of the algorithms approach the solution. 
Guided by \cref{fig:exp2}, 
we pick the $\lambda$ for which the median iterates are the least: 
$\lambda=0.99$ for Ryu, $\lambda=0.97$ for MT, and $\lambda=0.57$ for Campoy.
Similar to the setup of Experiment~2,
we fix 100 starting points and 100 triples of subspaces $(U_1,U_2,U_3)$. 
We then run the algorithms for 150 iterations for each starting point 
and each set of subspaces, 
and we measure the distance of the iterates $\overline{M}\bz_k$ to its limit $P_Zz_0$. 
\cref{fig:exp3} reports the median of these values for each iteration counter 
$k\in\{1,\dots,150\}$. 
{\color{black} Again note that for POCS, the governing sequence coincides with the shadow sequence.}
As can be seen in \cref{fig:exp3} for both governing and shadow sequences, Ryu converges faster to the solution compared to MT and Campoy. Ryu and MT show faint ``rippling'' as is well known to occur for the Douglas-Rachford algorithm.
Campoy, already being a DR algorithm, has more prominent ripples. 
{\color{black} As seen in the previous experiment, POCS outperforms the rest of the algorithms for its optimal value of $\lambda$.}
\begin{figure}[ht]
\centering
\includegraphics[width=\textwidth]{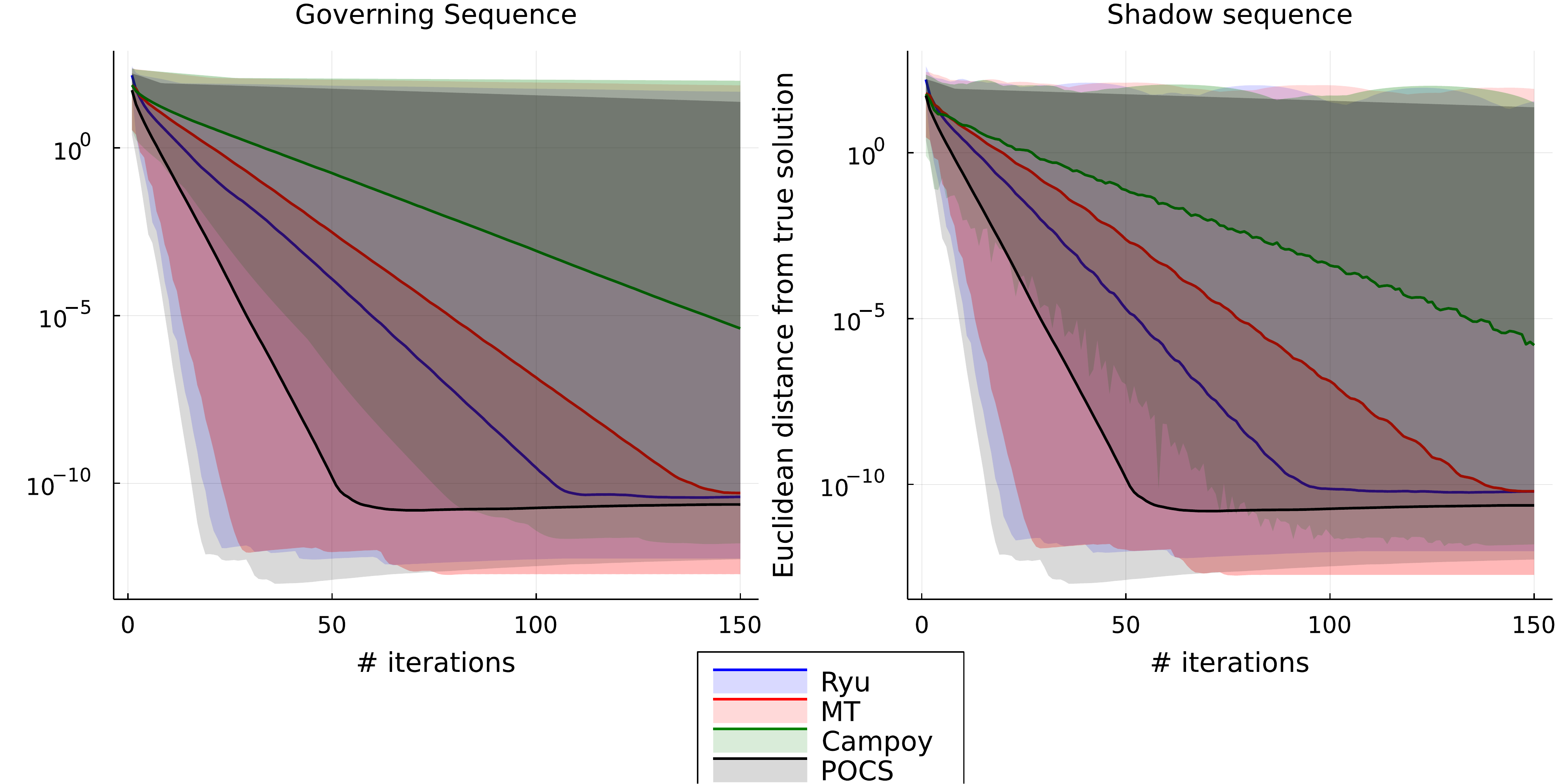}
\caption{Experiment 3: Convergence plot of the governing and shadow sequences. The median (solid line) and range (shaded region) of distances are reported.}
\label{fig:exp3}
\end{figure}

{\color{black}
\subsection{Discussion}

We tested the performance of these algorithms in the specific case of 3 subspaces in $\RR^6$ through different experiments. The performance of Ryu is better than that of MT and Campoy; however, MT and Campoy have the upper hand when it comes to extending to more subspaces since Ryu is only designed for the 3 subspaces case. Being a sequential algorithm operating solely in the original space (rather than a product space), 
POCS is faster than the three others in terms of convergence --- but the scope of these algorithms is not restricted to projection operators.
}

{\color{black}
\section{Three simple lines in the Euclidean plane}

\label{sec:3lines}

In this section, we consider a very simple situation:
Three lines passing through the origin, where the second line
bisects the angle between the first and the third.
We follow the set up in \cite[Section~5]{BBCNPW1}. 
Suppose that $X=\RR^2$, and 
set
$e_0 := [1,0]^T$, $e_{\pi/2} := [0,1]^T$, and also 
\begin{equation*}
e_\theta := \cos(\theta)e_0 + \sin(\theta)e_{\pi/2}.
\end{equation*}
Define the counterclockwise rotator
\begin{equation*}
R_\theta := 
\begin{bmatrix}
\cos(\theta) & -\sin(\theta)\\
\sin(\theta) & \cos(\theta)
\end{bmatrix}
\end{equation*}
and now set 
\begin{equation*}
U := \RR e_0, \quad 
V := \RR e_\theta = R_\theta(U), \quad
W := \RR e_{2\theta} = R_{2\theta}(U), 
\end{equation*}
where $\theta\in\left]0,\pi/2\right[$. 
It is clear that $Z=U\cap V\cap W = \{0\} = U\cap V=V\cap W=U\cap W$ 
and that the Friedrichs angle between $U$ and $V$, and between $V$ and $W$, is $\theta$ while the Friedrichs angle between $U$ and $W$ is $2\theta$. 
The projectors associated with $U,V,W$ are 
\begin{align*}
P_{U} &= 
\begin{bmatrix}
1 & 0\\
0 & 0 
\end{bmatrix},\\
P_{V} &= 
\begin{bmatrix}
\cos^2(\theta) & \sin(\theta)\cos(\theta)\\
\sin(\theta)\cos(\theta)& \sin^2(\theta)
\end{bmatrix},\\
P_{W} &= 
\begin{bmatrix}
\cos^2(2\theta) & \sin(2\theta)\cos(2\theta)\\
\sin(2\theta)\cos(2\theta)& \sin^2(2\theta)
\end{bmatrix}. 
\end{align*}

In the next few subsections, we discuss what the relevant splitting operators turn into, while in the last subsection we summarize our findings. The overall strategy is clear:
Find a formula for the splitting operator $T$ and then for $\Fix T$.
Consider $((1-\lambda)\Id+\lambda T)-P_{\Fix T}$ and determine its 
largest absolute eigenvalue and its largest spectral value to find the spectral radius and the operator norm. 
It turns out that this is easier said than done --- even with the help of SageMath \cite{Sage}.

\subsection{Ryu splitting}

The underlying $T = \tryu$ (see \cref{e:220525a}) turns out to be
{\scriptsize
\begin{align*}
T&=\begin{bmatrix}
2 \cos^4(\theta) - \cos^2(\theta) 
& -2 \big(2\cos^3(\theta) - \cos(\theta)\big)\sin(\theta) 
& -2\sin^4(\theta) + \sin^2(\theta) 
& -\big(2 \cos^3(\theta) - \cos(\theta)\big)\sin(\theta) \\
2\cos^3(\theta)\sin(\theta) 
& -4 \cos^2(\theta) \sin^2(\theta) + 1 
& 2 \cos(\theta)\sin^3(\theta) 
& 2 \cos^4(\theta) - 2 \cos^2(\theta) \\
2 \cos^4(\theta) - 2 \cos^2(\theta) 
& -2 \big(2 \cos^3(\theta) - \cos(\theta)\big) \sin(\theta) 
& -2 \cos^4(\theta) + 2 \cos^2(\theta) 
& -2 \cos^3(\theta)\sin(\theta) \\
\big(2 \cos^3(\theta) - \cos(\theta)\big) \sin(\theta) 
& -4 \cos^2(\theta) \sin^2(\theta) 
& -\big(2 \cos^3(\theta) - \cos(\theta)\big) \sin(\theta) 
& 2 \cos^4(\theta) - \cos^2(\theta) 
\end{bmatrix}.
\end{align*}}
Following the procedure of \cref{ss:Ryublock}, one eventually arrives 
at 
\begin{equation*}
P_{\Fix T} = 
\begin{bmatrix} 0 & 0 & 0 & 0 \\ 
0 & -\frac{1}{4 \sin^2(\theta) - 5} 
& \frac{2 \cos(\theta) \sin(\theta)}{4\cos^2(\theta) + 1} 
& -\frac{2 \big(\cos^4(\theta) - \cos^2(\theta)\big)}{4  \sin^4(\theta) - 5\sin^2(\theta)} \\ 
0 & \frac{2 \cos(\theta)\sin(\theta)}{4 \cos^2(\theta) + 1} 
& \frac{4 \big(\sin^4(\theta) - \sin^2(\theta)\big)}{4  \sin^2(\theta) - 5} 
& \frac{4 \big(\cos^5(\theta) - \cos^3(\theta)\big)}{\big(4  \cos^2(\theta) + 1\big) \sin(\theta)} \\ 
0 & -\frac{2 \big(\cos^4(\theta) - \cos^2(\theta)\big)}{4  \sin^4(\theta) - 5  \sin^2(\theta)} 
& \frac{4 \big(\cos^5(\theta) - \cos^3(\theta)\big)}{\big(4  \cos^2(\theta) + 1\big) \sin(\theta)} 
& -\frac{4 \cos^4(\theta)}{4 \sin^2(\theta) - 5} \end{bmatrix}.
\end{equation*}
Now set $T_\lambda := (1-\lambda)\Id+\lambda T$. 
We are again interested in the spectral radius and the operator norm of $T_\lambda-P_{\Fix T}$ --- unfortunately, even with the help of SageMath \cite{Sage}, we were not able to compute these quantities. 
In fact, even the assignment $\lambda=\thalb$ did not lead to
manageable expressions.

\subsection{Malitsky-Tam splitting}

Using \cref{e:220526a}, we find that 
$T=\tmt$ is 
\begin{equation*}
T = \begin{bmatrix}
0 & -\cos(\theta) \sin(\theta) & \cos^2(\theta) 
& \cos(\theta) \sin(\theta) \\
0 & \cos^2(\theta) & \cos(\theta) \sin(\theta) 
& \sin^2(\theta) \\
4 \cos^4(\theta) - 4 \cos^2(\theta) + 1 
& 2 \cos(\theta) \sin^3(\theta) & 0 
& -2 \big(2 \cos^3(\theta) - \cos(\theta)\big) \sin(\theta) \\
2 \big(2 \cos^3(\theta) - \cos(\theta)\big) \sin(\theta) 
& 2 \sin^4(\theta) - \sin^2(\theta) & 0 
& -4  \cos^2(\theta) \sin^2(\theta) + 1
\end{bmatrix}. 
\end{equation*}
Following the recipe outlined in \cref{ss:MTblock} yields
\begin{align*}
P_{\Fix T} &= 
\begin{bmatrix} 
0 & 0 & 0 & 0 \\ 0 
& \thalb & \cos(\theta)\sin(\theta) 
& \sin^2(\theta) - \thalb \\ 
0 & \cos(\theta) \sin(\theta) 
& -2 \sin^4(\theta) + 2 \sin^2(\theta) 
& 2 \cos(\theta)\sin^3(\theta) - \cos(\theta)\sin(\theta) \\ 
0 & \sin^2(\theta) - \thalb 
& 2 \cos(\theta)\sin^3(\theta) - \cos(\theta)\sin(\theta)
& 2 \sin^4(\theta) - 2 \sin^2(\theta) + \thalb
\end{bmatrix}. 
\end{align*}
Now set $T_\lambda := (1-\lambda)\Id+\lambda T$. 
We are again interested in the spectral radius and the operator norm of $T_\lambda-P_{\Fix T}$. 
Unfortunately, even with the help of SageMath \cite{Sage}, we were not able to compute eigenvalues or the operator norm, even when we specialized to $\lambda=\thalb$.

\subsection{Campoy splitting}
Using \cref{e:220526c}, we compute the Campoy operator $T=\tc$ to be 
\begin{align*}
T = 
\begin{bmatrix}
-\sin^2(2\theta) 
& \sin(2\theta)\cos(2\theta)
& \cos^2(2\theta)
& \sin(2\theta)\cos(2\theta)\\
- \sin(2\theta)\cos(2\theta)
& \cos^2(2\theta)
& -\sin(2\theta)\cos(2\theta)
& -\sin^2(2\theta) \\
\cos(2\theta) 
& \sin(2\theta) & 0 & 0 \\
0 & 0 & -\sin(2\theta) 
& \cos(2\theta)
\end{bmatrix}.
\end{align*}
The operator $T$ is actually an isometry (as can be checked
directly or after applying \cite[Proposition~3.5]{BC2017});
unfortunately, we could not determine
$P_{\Fix T}$ because the computation of the symbolic computation of
Moore-Penrose inverses in \cref{e:211116b} turned out to be too complicated even when using SageMath \cite{Sage}.

\subsection{POCS}

The method of Projections onto Convex Sets (POCS) is known to converge to the projection onto the intersection when applied to linear subspaces. In contrast to the parallel splitting methods discussed above, POCS is sequential in nature not requiring any product space. 
First, we note that 
\begin{align*}
P_{W}P_{V}P_{U}
&= 
\begin{bmatrix} 2  \cos^4(\theta) - \cos^2(\theta) & 0 \\
2 \cos^3(\theta)\sin(\theta) & 0
\end{bmatrix}
= 
\cos^2(\theta)
\begin{bmatrix} 2\cos^2(\theta) - 1 & 0 \\
2 \cos(\theta)\sin(\theta) & 0
\end{bmatrix}\\
&=
\cos^2(\theta)
\begin{bmatrix} \cos(2\theta) & 0 \\
\sin(2\theta) & 0
\end{bmatrix}. 
\end{align*}
Next, the reflected POCS operator $T$ (see \cref{e:RPOCS}) simplifies to 
\begin{align*}
T &:= \tfrac{4}{3}P_WP_VP_U-\tfrac{1}{3}\Id
=\tfrac{1}{3}\begin{bmatrix}
{8}\cos^4({\theta}) - {4}\cos^2({\theta}) - {1} & 0 \\
{8}\cos^3({\theta}) \sin({\theta}) & -{1}
\end{bmatrix}.
\end{align*}
Note that $\Fix T=\{0\}$  and hence $P_{\Fix T}=0$. 
Thus 
Next, we set as before 
$T_\lambda := (1-\lambda)\Id + \lambda T$; hence, 
\begin{align*}
T_\lambda-P_{\Fix T}
= (1-\lambda)\Id + \lambda T. 
\end{align*}
After simplification, the two eigenvalues of $T_\lambda-P_{\Fix T}=T_\lambda$ are seen to be real
and equal to $1-\tfrac{4}{3}\lambda$ and $1+\tfrac{4}{3}\lambda\big(2\sin^4(\theta)-3\sin^2(\theta))$. 
It follows that the spectral radius of
$T_\lambda-P_{\Fix T}$ is equal to 
\begin{equation*}
\max\big\{\big|1-\tfrac{4}{3}\lambda\big|,
\big|1+\tfrac{4}{3}\lambda\big(2\sin^4(\theta)-3\sin^2(\theta)\big)\big| \big\}. 
\end{equation*}
The eigenvalues of $(T_\lambda-P_{\Fix T})^*(T_\lambda-P_{\Fix T})
= T_\lambda^*T_\lambda$ are available but too complicated to list here.
To make progress analytically, we assume that
\begin{equation*}
\lambda = \tfrac{4}{3},
\end{equation*}
i.e., we consider the original POCS operator $P_WP_VP_U$. 
The spectral radius of $T_\lambda-P_{\Fix T}$ simplifies then to 
\begin{equation*}
1+2\sin^4(\theta)-3\sin^2(\theta)=\cos^2(\theta)\cos(2\theta)
\end{equation*}
while the operator norm of $T_\lambda-P_{\Fix T}$ is 
\begin{equation*}
\cos^2(\theta). 
\end{equation*}

\subsection{Discussion}

It is tempting to ask whether the splitting methods by 
Ryu, by Malitksy-Tam, and by Campoy allow a complete analysis like the one available for Douglas--Rachford splitting for two subspaces as carried out in \cite{BBCNPW1}. 
Unfortunately, a symbolic analysis of
these methods seems much harder even for the shockingly simple case considered in this section. Thus at present, we are rather pessimistic about the prospect of obtaining pleasant convergence rate for this new breed of splitting methods --- but we hope that the reader will prove us wrong!

}

\section{Conclusion}

\label{sec:end}

In this paper, we investigated the recent splitting methods 
by Ryu, by Malitsky-Tam, and by Campoy 
in the context of normal cone operators
for subspaces. We discovered and proved that all three algorithms find not just 
some solution but in fact the \emph{projection} of the starting point onto the
intersection of the subspaces. Moreover, convergence of the iterates 
is \emph{strong} even in infinite-dimensional settings. 
Our numerical experiments illustrated that Ryu's method seems to converge 
faster although neither Malitsky-Tam splitting nor Campoy 
splitting is limited in its applicability to
just 3 subspaces. 

Two natural avenues for future research are the following.
Firstly, when $X$ is finite-dimensional, we know that the convergence
rate of the iterates
is linear. While we illustrated this linear convergence
numerically in this paper, it is open whether there are \emph{natural
bounds for the linear rates} in terms of some version of angle between
the subspaces involved. 
For the prototypical Douglas-Rachford splitting framework, 
this was carried out in  \cite{BBCNPW1} and \cite{BBCNPW2} 
in terms of the \emph{Friedrichs angle}.
Secondly, what can be said in the \emph{inconsistent} affine case?
Again, the Douglas-Rachford algorithm may serve as a guide to what 
the expected results and complications might be; see, e.g., \cite{BM16}. 
{\color{black} However, these topics for further 
research appear to be quite hard.}

\section*{Acknowledgments}
HHB and XW were supported by NSERC Discovery Grants. 
The authors thank Alex Kruger for making us aware of his \cite{Kruger81} and \cite{Kruger85} 
which is highly relevant for Campoy splitting. 
{\color{black}
Last but not least, we thank the editor and the anonymous reviewers for their pertinent and constructive comments. 
}

\end{document}